\title[Ball covering property from commutative to non-commutative spaces]{Ball covering property from commutative function spaces to non-commutative spaces of operators}
\author{Minzeng Liu}
\address{School of Mathematical Sciences and LPMC, Nankai University, Tianjin
300071, P.R. China}
\email{2120200027@mail.nankai.edu.cn}
\author{Rui Liu}
\address{School of Mathematical Sciences and LPMC, Nankai University, Tianjin
300071, P.R. China}
\email{ruiliu@nankai.edu.cn}
\author{Jimeng Lu}
\address{School of Mathematics and Statistics, University of New South Wales,
Sydney, NSW 2052, Australia}
\address{School of Mathematical Sciences and LPMC, Nankai University, Tianjin
300071, P.R. China}
\email{jimeng.lu@unsw.edu.au, jimenglu@mail.nankai.edu.cn}
\author{Bentuo Zheng}
\address{Department of Mathematics, The University of Memphis, Memphis, TN 38152, USA}
\email{bzheng@memphis.edu}
\thanks{Rui Liu was partially supported by the
National Natural Science Foundation of China (No. 11671214, 11971348, 12071230)
and hundred young academia leaders program of Nankai University.
Bentuo Zheng was supported in part by Simons Foundation Grant 585081}
\date{}
\newtheorem{thm}{Theorem}[section]
\newtheorem{cor}{Corollary}[section]
\newtheorem{lemma}{Lemma}[section]
\newtheorem{remark}{Remark}[section]
\newtheorem{defn}[thm]{Definition}
\def\supp{\operatorname{supp}}
\newcommand{\N}{\mathbb{N}}
\newcommand{\R}{\mathbb{R}}
\newcommand{\cB}{\mathcal{B}}
\newcommand{\cF}{\mathcal{F}}
\newcommand{\cK}{\mathcal{K}}
\begin{document}
\maketitle
\begin{abstract}
A Banach space is said to have the ball-covering property (abbreviated BCP) if its unit sphere can be covered by countably many closed, or equivalently, open balls off the origin. Let $K$ be a locally compact Hausdorff space and $X$ be a Banach space. In this paper, we give a topological characterization of BCP, that is, the continuous function space $C_0(K)$ has the (uniform) BCP if and only if $K$ has a countable $\pi$-basis. Moreover, we give the stability theorem: the vector-valued continuous function space $C_0(K,X)$ has the (strong or uniform) BCP if and only if $K$ has a countable $\pi$-basis and $X$ has the (strong or uniform) BCP. We also explore more examples for BCP on non-commutative spaces of operators $\cB(X,Y)$. In particular, these results imply that $\cB(c_0)$, $\cB(\ell_1)$ and every subspaces containing finite rank operators in $\mathcal{B}(\ell_p)$ for $1< p<\infty$ all have the BCP, and $\mathcal B(L_1[0,1])$ fails the BCP. Using those characterizations and results, we show that BCP is not hereditary for 1-complemented subspaces (even for completely 1-complemented subspaces in operator space sense) by constructing two different counterexamples.

\end{abstract}

\section{Introduction}
How to cover subsets of normed spaces by different shapes of open sets is one of the most intensively investigated problems in the study of geometric and topological properties of these spaces.  A normed space $X$ is said to have the \textit{ball-covering property}  (\textit{BCP}) \cite{C} if its unit sphere can be covered by countably many closed, or equivalently, open balls, none of which contains the origin of $X$. The centers of the balls are called the \textit{BCP points} of $X$. It is worth noting that there are two frequently used variants of this definition. Suppose that $\{B_n\}$ is a sequence of balls off the origin whose union cover the unit sphere of $X$. Then $X$ is said to have the \textit{strong ball-covering property} (\textit{SBCP}) \cite{LZh2}, if the radii of $\{B_n\}$ is a bounded sequence. $X$ is said to have the \textit{uniform ball-covering property} (\textit{UBCP}) \cite{LZh2}, if $X$ has the SBCP, and there exists a ball $B_0$ containing the origin such that $B_n\cap B_0=\emptyset$ for all $n\in\mathbb N$.

It is clear from the definition that all separable normed spaces have the BCP. While $\ell_\infty$ admits the BCP, it was shown by L. Cheng, Q. Cheng and X. Liu \cite{CCL} that it can be renormed to fail this property. As a consequence, the BCP is not preserved under linear isomorphisms. Furthermore, V. P. Fonf and C. Zanco \cite{FZ} proved that $X^*$ is $w^*$-separable if and only if $X$ can be ($1+\varepsilon$)-equivalently renormed to have the SBCP for all $\varepsilon>0$, which further reveals the fact that the BCP is a geometric property deeply related to the weak star topology for dual spaces. The BCP is also widely linked to a number of important properties, such as the $G_\delta$ property of points in $X$, Radon-Nikodym property \cite{CWWZ}, uniform convexity, uniform non-squareness, strict convexity and dentability \cite{SC2, SC3}, and universal finite representability and B-convexity \cite{Z}.

The stability of the BCP under operations of Banach spaces, such as product and direct sum, has been a focus in recent study. It was proved in \cite{LLW} that for $1\leq p\leq\infty$, $(X\times Y,||\cdot||_p)$ has the BCP if and only if $X$ and $Y$ have the BCP. Z. Luo and B. Zheng \cite{LZh1} proved that $\left(\sum\oplus X_k\right)_E$ has the BCP if and only if each summand $X_k$ has the BCP, where $E$ is either a Lorentz sequence space, a separable Orlicz sequence space, or $\ell_\infty$. It was also shown in [6] that if $(\Omega,\Sigma,\mu)$ is a separable measure space, then the space of Bochner integrable functions $L^p(\mu,X)$ has the BCP if and only if $X$ has the BCP.

Although a lot of progress has been made for the study of the BCP recently, it is still unknown for what locally compact Hausdorff space $K$, $C_0(K)$ has the BCP. Since the Banach-Stone theorem tells us that the topological properties of $K$ can be transferred to geometric properties of $C_0(K)$ spaces, the following more general question is natural.\\

\noindent
{\bf Question 1:} {\it Let $K$ be a locally compact Hausdorff space and $X$ be a Banach space. Can we find a topological property $P$ of $K$ so that $K$ has property $P$ and $X$ has the BCP if and only if $C_0(K, X)$ has the BCP}?\\

Considering the definition the BCP, we had an intuitive conjecture that $K$ has a countable topological basis if and only if $C_0(K)$ has the BCP. It is not very difficult to show that if $K$ has a countable topological basis, then $C_0(K)$ has the BCP. But we could not prove the converse until we realized that a countable $\pi$-basis is the right topological property.

\begin{defn}\cite{Tam, A.K.}
Let $K$ be a topological space. A collection of nonempty open subsets $\mathcal{U}=\{U_{\alpha}\}_{\alpha\in\mathcal{I}}$ of $K$ is called a $\pi$-basis (weak basis) for $K$ if every nonempty open subset of $K$ contains some $U_{\alpha}$ in $\mathcal{U}$.
\end{defn}

The notion of weak bases was introduced by Arhangel'skii \cite{A} to study symmetrizable spaces. The famous Nagata-Smirnov metrization theorem states that a topological space $K$ is metrizable if and only if it is regular, Hausdorff and has a countably locally finite basis. There are other important metrization theorems for topological spaces with weak bases given by R. E. Hodel \cite{H} and H. W. Martin \cite{M}.

The first goal of this paper is to give a complete answer to Question 1, which can be viewed as a geometric characterization of countable $\pi$-basis (weak bases) for topological spaces.

As mentioned earlier, the BCP is not preserved under isomorphisms. Moreover, it was proved in \cite{CCL} that the BCP does not pass to subspaces and quotients. If $X$ is the $\ell_p$ direct sum of $Y$ and $Z$, then $X$ has the BCP if and only $Y$ and $Z$ have the BCP. However whether the BCP passes to $1$-complemented subspaces is still open.\\

\noindent
{\bf Question 2:} {\it Let $X$ be a Banach space with the BCP and $Y$ be an $1$-complemented subspace of $X$. Does $Y$ have the BCP}?\\

Recall that $Y$ is said to be a complemented subspace of $X$ if there is a continuous linear projection from $X$ onto $Y$. If in addition, the projection can be chosen to have norm one, then $Y$ is an $1$-complemented subspace of $X$. The second goal of this paper is to provide a counterexample for Question 2. To be more precise, we show that there exists a Banach space $X$ with the UBCP so that there is an $1$-complemented $Y$ of $X$ which fails the BCP. The construction of the counterexample is based on the above topological characterization of the BCP.

In the last section, we explore more examples for the BCP on non-commutative spaces of operators on separable Banach spaces. We prove that all subspaces containing finite rank operators in $\cB(c_0)$ and $\cB(\ell_p)$ $(1\le p<\infty)$ have the BCP, and that $X^*$ has the BCP (so $X$ is separable) is a necessary condition for $\cB(X)$ to have the BCP. An immediate consequence is that $\cB(L^1[0,1])$ fails the BCP. Using either the topological characterization or those new examples on non-commutative spaces, we can show that the BCP does not pass to 1-complemented subspaces (or even completely 1-complemented subspaces in the operator space sense).

 In this paper, the letters $X$ and $Y$ will denote Banach spaces and $K$ will denote a locally compact Hausdorff space, and by $X\subset Y$ we mean that $X$ can be isometrically embedded into $Y$. For $x\in X$ and $r>0$, we denote the open ball centered at $x$ with radius $r$ by $B^\circ(x,r)$ and its closure by $B(x,r)$. The closed unit ball and unit sphere of $X$ will be denoted by $B_X$ and $S_X$ respectively.  $C_0(K,X)$ is the Banach space of continuous functions from $K$ into $X$ vanishing at infinity endowed with the supremum norm, and $\mathcal B(X, Y)$, $\cK(X,Y)$ and $\cF(X,Y)$ are the space of all bounded linear operators, compact operators and finite rank operators from $X$ into $Y$, respectively.

\section{Topological Characterization and $C_0(K,X)$ }

In this section, we shall make use of the following result in our proof.

\begin{lemma}[\cite{LZh1}, Lemma 2.2]\label{lem1.1}
	\it Let $X$ be a normed space and $x\in X\backslash\{0\}$. If $0<s<t$, then
$B^\circ(sx,||sx||)\subset B^\circ(tx,||tx||)$ and $||sx||-||y-sx||\leq||\,tx||-||\,y-tx||$ for $y\in X$.
\end{lemma}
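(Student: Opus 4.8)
The plan is to derive both assertions directly from the triangle inequality, treating the scalar inequality as the core fact. Fix $y\in X$ and set $\varphi(r)=\|rx\|-\|y-rx\|$ for $r>0$; then the second assertion is precisely the statement that $\varphi$ is nondecreasing, and the first is the observation that $y\in B^\circ(rx,\|rx\|)$ if and only if $\varphi(r)>0$, so that monotonicity of $\varphi$ immediately yields the inclusion of balls. Accordingly I would prove the monotonicity first and then read off the inclusion.

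To show $\varphi(s)\le\varphi(t)$ for $0<s<t$, I would rewrite this as $\|tx\|-\|sx\|\ge\|y-tx\|-\|y-sx\|$. Since $x\neq 0$ and $s<t$, the left-hand side equals $(t-s)\|x\|$. For the right-hand side, decompose $y-tx=(y-sx)-(t-s)x$ and apply $\|a+b\|-\|a\|\le\|b\|$ with $a=y-sx$ and $b=-(t-s)x$, obtaining $\|y-tx\|-\|y-sx\|\le(t-s)\|x\|$; comparing the two sides proves the inequality. For the inclusion one can either invoke $\varphi(s)\le\varphi(t)$ as just established, or argue directly: if $\|y-sx\|<\|sx\|=s\|x\|$, the same decomposition gives $\|y-tx\|\le\|y-sx\|+(t-s)\|x\|<s\|x\|+(t-s)\|x\|=t\|x\|=\|tx\|$, so $y\in B^\circ(tx,\|tx\|)$.

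I do not anticipate any genuine obstacle, since the entire content is the triangle inequality; the only point requiring a little care is the bookkeeping of strict versus non-strict inequalities, in particular checking that the strict inequality $\|y-sx\|<\|sx\|$ is preserved when passing from $s$ to $t$ (it is, because only the nonnegative quantity $(t-s)\|x\|$ is added to both sides).
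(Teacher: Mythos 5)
Your proof is correct and complete: the monotonicity of $\varphi(r)=\|rx\|-\|y-rx\|$ follows exactly as you say from $\|tx\|-\|sx\|=(t-s)\|x\|$ together with the triangle inequality applied to $y-tx=(y-sx)-(t-s)x$, and the ball inclusion is an immediate consequence. The paper itself gives no proof of this lemma (it is quoted from the cited reference), but your argument is the standard one and there is nothing to add.
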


\begin{thm}\label{thm1}
Let $K$ be a locally compact Hausdorff space. Then
the followings are equivalent:
\begin{enumerate}
  \item[(i)] $C_0(K)$ has the BCP;
  \item[(ii)] $K$ has a countable $\pi$-basis;
  \item[(iii)] $C_0(K)$ has the UBCP.
\end{enumerate}
\end{thm}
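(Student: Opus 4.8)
The plan is to prove the cycle of implications $(iii)\Rightarrow(i)\Rightarrow(ii)\Rightarrow(iii)$, where $(iii)\Rightarrow(i)$ is immediate since UBCP is by definition stronger than BCP. The substantive work is in the two remaining implications, and I expect the direction $(i)\Rightarrow(ii)$ — extracting a countable $\pi$-basis of $K$ from an abstract ball-covering of $S_{C_0(K)}$ — to be the main obstacle, since it requires turning analytic data (countably many balls) into a combinatorial/topological object on $K$.

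For $(ii)\Rightarrow(iii)$, suppose $\{U_n\}_{n\in\N}$ is a countable $\pi$-basis for $K$. By local compactness and Urysohn's lemma, for each $n$ I would pick a point $t_n\in U_n$ and a function $f_n\in C_0(K)$ with $\|f_n\|=1$, $f_n(t_n)=1$, and $f_n\geq 0$ supported near $t_n$. The idea is that the balls $B(\lambda f_n,\lambda)$ for a fixed large $\lambda>1$ (or a countable set of scalars $\lambda$) should cover $S_{C_0(K)}$ while staying off the origin: by Lemma \ref{lem1.1} the ball $B^\circ(\lambda f_n,\|\lambda f_n\|)$ grows with $\lambda$ and $\|\lambda f_n\|-\|y-\lambda f_n\|$ increases, so it suffices to check that every $g\in S_{C_0(K)}$ lands in some such ball. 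Given $g$ with $\|g\|=1$, there is a point $s$ and a neighborhood where $g$ is close to a unimodular value; rotating by a scalar and using the $\pi$-basis to find $U_n$ inside that neighborhood with $f_n$ concentrated there, one checks $\|g-\lambda f_n\|<\lambda$ for $\lambda$ large enough depending only on how close $|g|$ is to $1$ on $U_n$ — and since $\|g\|=1$ this closeness can be made uniform after passing to a countable family of scalars. A fixed ball $B_0=B^\circ(0,1-\delta)$ is then disjoint from all the $B_n$, giving UBCP.

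For $(i)\Rightarrow(ii)$, assume $\{B(f_n,r_n)\}_{n\in\N}$ covers $S_{C_0(K)}$ with $0<r_n<\|f_n\|$. For each $n$, the condition $\|f_n\|>r_n$ forces $f_n\neq 0$, so $W_n:=\{t\in K: |f_n(t)|>r_n\}$ is a nonempty open set. The claim I would aim for is that $\{W_n\}_{n\in\N}$ (or a refinement of it) is a $\pi$-basis for $K$. Suppose not: then there is a nonempty open $V\subset K$ containing no $W_n$, i.e. for every $n$ there exists $t\in V$ with $|f_n(t)|\leq r_n$. Fix a point $t_0\in V$ and, using local compactness and Urysohn, build a unit-norm function $g$ supported in $V$ with $g(t_0)=1$; then $g\in S_{C_0(K)}$, so $g\in B(f_n,r_n)$ for some $n$, i.e. $\|g-f_n\|\leq r_n$. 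Evaluating at the point $t\in V$ where $|f_n(t)|\leq r_n$: if that $t$ can be chosen so that $|g(t)|$ is close to $1$ we reach a contradiction since $|g(t)-f_n(t)|\geq |g(t)|-r_n$ would exceed $r_n$. The delicacy is that the bad point $t$ for $f_n$ need not be a point where $g$ is large, so the actual argument must be iterative: one uses the failure of the $\pi$-basis property \emph{inside every} nonempty open subset of $V$ to carve out, by transfinite or just ordinary induction, a unit function whose mass sits exactly where all relevant $f_n$ are small. This is where I expect to spend the most care — organizing the construction of $g$ so that a single $g\in S_{C_0(K)}$ simultaneously evades every ball, contradicting the covering.

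Once the $\pi$-basis property of $\{W_n\}$ is established, $(ii)$ follows, closing the cycle. Two technical points worth isolating as sub-lemmas: first, a Urysohn-type statement that in a locally compact Hausdorff space, for any nonempty open $V$ and any finite (or suitably controlled) list of "smallness" constraints, one can produce $g\in S_{C_0(K)}$ supported in $V$ realizing the required profile; second, the scalar-rotation observation (handling $\C$ versus $\R$ coefficients uniformly) that lets one reduce "$|g(t)|$ close to $1$" to "$g(t)$ close to $1$." Both are routine but should be stated cleanly so the main argument reads smoothly.
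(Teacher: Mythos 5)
Your implication (ii)$\Rightarrow$(iii) is essentially the paper's argument (the paper fixes one $\lambda>1$, takes $\pm f_n$ with $\|f_n\|=\lambda$ and $\supp(f_n)\subset U_n$, and uses that $g^{-1}(1/2,1]$ or $g^{-1}[-1,-1/2)$ is a nonempty open set; no auxiliary family of scalars is needed), so that half is fine, if somewhat more convoluted than necessary.

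The gap is in (i)$\Rightarrow$(ii), and it is a logical one rather than a technical one. You propose the right candidate $\pi$-basis $W_n=\{t:|f_n(t)|>r_n\}$ and correctly phrase its failure as ``there is a nonempty open $V$ containing no $W_n$,'' but you then translate this as ``for every $n$ there exists $t\in V$ with $|f_n(t)|\le r_n$.'' That is the negation of $V\subset W_n$, not of $W_n\subset V$. The correct translation is: for every $n$ there exists $t_n\notin V$ with $|f_n(t_n)|>r_n$. With that in hand the proof is one line and needs no iteration: take any $g\in S_{C_0(K)}$ with $\supp(g)\subset V$ (Urysohn plus local compactness); then $g(t_n)=0$, so $\|g-f_n\|\ge|g(t_n)-f_n(t_n)|=|f_n(t_n)|>r_n$ for every $n$, contradicting the covering. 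The ``delicate iterative or transfinite construction'' you anticipate is an artifact of the wrong negation, and the contradiction you sketch there (getting $|g(t)|-r_n>r_n$ at a point $t\in V$ where $|f_n(t)|\le r_n$) cannot work in general anyway, since it requires $r_n<1/2$ while the radii may be arbitrarily close to $\|f_n\|$. The paper's own proof uses exactly this evaluate-outside-$V$ trick, applied to the near-maximum sets $A_{n,k}=g_n^{-1}\big([-\|g_n\|,-\|g_n\|+1/k)\cup(\|g_n\|-1/k,\|g_n\|]\big)$ in place of your $W_n$; either family works once the quantifiers are straightened out, so you should repair the negation and delete the iterative construction rather than try to carry it out.
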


\begin{proof}
(ii)$\Rightarrow$(iii):
Suppose $K$ has a countable $\pi$-basis $(U_n)$. Let $\lambda>1$. Let $(f_n)$ be a sequence of functions in $C_0(K)$ so that $\|f_n\|=\lambda, f_n\geq 0$ and $\supp(f_n)\subset U_n$. Then the set of functions $(\pm f_n)$ are BCP points for $C_0(K)$. Actually for any $g\in C_0(K)$ with $\|g\|=1$, either $g^{-1}[-1, -1/2)$ or $g^{-1}(1/2, 1]$ is a nonempty open set. Without loss of generality, we assume $g^{-1}(1/2, 1]$ is nonempty. Then there exists some $U_n$ contained in $g^{-1}(1/2, 1]$. So we have
\begin{equation*}
\begin{split}
\|g-f_n\|&=\max\{\sup_{\tau\in U_n}|g(\tau)-f_n(\tau)|, \sup_{\tau\in U_n^c} |g(\tau)-f_n(\tau)|\}\\
&=\max\{\sup_{\tau\in U_n}|g(\tau)-f_n(\tau)|, \sup_{\tau\in U_n^c} |g(\tau)|\}\\
&\leq \max\{\sup_{\tau\in U_n}|g(\tau)-f_n(\tau)|, 1\}\\
&\leq \max\{\lambda-1/2, 1\}\\
&<\lambda.
\end{split}
\end{equation*}
Hence $C(K)$ has the UBCP.

(i)$\Rightarrow$(ii):
Now suppose that $K$ does not have a countable $\pi$-basis. Assume that $(g_n)$ is a set of BCP points for $C_0(K)$ with $\|g_n\|>1$ for each $n\in\mathbb{N}$. Let
$$
A_{n,k}=g_n^{-1}([-\|g_n\|, -\|g_n\|+1/k)\cup (\|g_n\|-1/k, \|g_n\|], \forall n, k\in\mathbb{N}.
$$
Since $(A_{n,k})$ cannot form a $\pi$-basis for $K$, there exists a nonempty open set $U\subset K$ so that $A_{n,k}\setminus U$ is nonempty for all $n, k\in\mathbb{N}$. Let $f$ be any function in $C_0(K)$ with $\supp(f)\subset U$. Then for all $n\in\mathbb{N}$, we have
\begin{equation*}
\begin{split}
\|f-g_n\|&=\max\{\sup_{\tau\in U}|f(\tau)-g_n(\tau)|, \sup_{\tau\in U^c} |f(\tau)-g_n(\tau)|\}\\
&\geq\sup_{\tau\in U^c} |f(\tau)-g_n(\tau)|\\
&=\sup_{\tau\in U^c} |g_n(\tau)|\\
&=\|g_n\|
\end{split}
\end{equation*}
This implies that $(g_n)$ is not a set of BCP points for $C_0(K)$ and we get a contradiction.
\end{proof}

Now we give the topological characterization of BCP for the vector-valued case.

\begin{thm}\label{thm2}
Let $K$ be a locally compact Hausdorff space and $X$ be a Banach space. Then
the followings are equivalent:
\begin{enumerate}
  \item[(i)] $C_0(K, X)$ has the BCP;
  \item[(ii)] $C_0(K)$ and $X$ both have the BCP;
  \item[(iii)] $K$ has a countable $\pi$-basis and $X$ has the BCP.
\end{enumerate}
\end{thm}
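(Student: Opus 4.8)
Since Theorem~\ref{thm1} already yields (ii)$\Leftrightarrow$(iii), the plan is to prove (iii)$\Rightarrow$(i) by explicitly producing a countable ball cover of $S_{C_0(K,X)}$, and (i)$\Rightarrow$(iii) by two separate arguments that recover, respectively, a countable $\pi$-basis of $K$ and the BCP of $X$.

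For (iii)$\Rightarrow$(i): fix a countable $\pi$-basis $(U_n)$ of $K$ and, for each $n$, a fixed $\psi_n\in C_0(K)$ with $0\le\psi_n\le 1$, $\|\psi_n\|=1$ and $\supp\psi_n\subseteq U_n$ (Urysohn's lemma). Using the BCP of $X$, write $S_X\subseteq\bigcup_m B(x_m,\rho_m)$ with $\rho_m<\|x_m\|$, and set $\eta_m:=\|x_m\|-\rho_m>0$. I claim the countable family $\{\psi_n\cdot(qx_m):n,m\in\N,\ q\in\Q\cap(0,\infty)\}$, together with all rational radii strictly below the respective norms, covers $S_{C_0(K,X)}$ off the origin. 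Given $g$ on the unit sphere, pick $\tau_1$ with $\|g(\tau_1)\|>\tfrac12$, put $x:=g(\tau_1)$, choose $m$ with $x/\|x\|\in B(x_m,\rho_m)$, then (as $\{\tau:\|g(\tau)-x\|<\eta_m/4\}$ is a nonempty open set) choose $n$ with $U_n$ contained in it, and finally a rational $q>\max\{\|x\|,(1+\eta_m/2)/\|x_m\|\}$. Put $z:=qx_m$. Off $U_n$ one has $\|g(\tau)-\psi_n(\tau)z\|=\|g(\tau)\|\le 1<\|z\|$; on $U_n$, using $\|g(\tau)-x\|<\eta_m/4$ and that $x-\psi_n(\tau)z$ is a convex combination of $x$ and $x-z$, this quantity is at most $\eta_m/4+\max\{\|x\|,\|x-z\|\}$. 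Now invoke the ``moreover'' part of Lemma~\ref{lem1.1} with $s=\|x\|<t=q$ to get $\|x-z\|\le\|z\|-\|x\|\eta_m\le\|z\|-\eta_m/2$; since also $\|x\|\le 1\le\|z\|-\eta_m/2$, this gives $\|g-\psi_n z\|\le\|z\|-\eta_m/4<\|z\|=\|\psi_n z\|$, so $g$ lies in a rational-radius closed ball about $\psi_n z$ that misses the origin. I expect this direction to be the main obstacle: $\psi_n$ cannot be taken identically $1$ on $U_n$ unless $U_n$ is clopen, so it has a ``transition zone'' where $\psi_n\in(0,1)$, and it is precisely the freedom to push the centers far out along the rays $\R_{>0}x_m$ — controlled by Lemma~\ref{lem1.1} — that absorbs this defect.

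For (i)$\Rightarrow$(iii), assume $C_0(K,X)$ has the BCP with BCP points $(f_n)$ and radii $r_n<\|f_n\|$. First, \emph{$K$ has a countable $\pi$-basis}: I would mimic the proof of (i)$\Rightarrow$(ii) in Theorem~\ref{thm1} with $|\cdot|$ replaced by $\|\cdot\|_X$. The sets $A_{n,k}:=\{\tau:\|f_n(\tau)\|>\|f_n\|-1/k\}$ are nonempty, open, and countably many; were they not a $\pi$-basis, there would be a nonempty open $U$ with $A_{n,k}\setminus U\neq\emptyset$ for all $n,k$, and then $g:=\psi\cdot x_0$ (with $\psi\in C_0(K)$ a norm-one bump supported in $U$ and $x_0\in S_X$) would lie on the unit sphere while $\|g-f_n\|\ge\sup_{\tau\notin U}\|f_n(\tau)\|=\|f_n\|>r_n$ for all $n$, contradicting the cover. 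Second, \emph{$X$ has the BCP}: fix $\phi\in C_0(K)$ with $0\le\phi\le1$, $\|\phi\|=1$ and $\supp\phi$ compact. For $x\in S_X$, $\phi x$ lies on the unit sphere of $C_0(K,X)$, so $\|\phi x-f_n\|\le r_n$ for some $n$; since $\phi x$ vanishes off $\supp\phi$, this forces $\sup_{\tau\in\supp\phi}\|f_n(\tau)\|=\|f_n\|>r_n$, attained at some $\sigma_n\in\supp\phi$ depending only on $n$. Then $\phi(\sigma_n)\ge\|f_n\|-r_n>0$ and $\|x-f_n(\sigma_n)/\phi(\sigma_n)\|\le r_n/\phi(\sigma_n)<\|f_n(\sigma_n)/\phi(\sigma_n)\|$, so the countably many balls $B\big(f_n(\sigma_n)/\phi(\sigma_n),\,r_n/\phi(\sigma_n)\big)$, over the relevant $n$, cover $S_X$ off the origin. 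I would emphasize that this last step genuinely uses the sup-norm structure: although $X$ embeds into $C_0(K,X)$ as a $1$-complemented subspace, one cannot shortcut through that embedding, since the BCP does not pass to $1$-complemented subspaces — rather, the point is that a single BCP point $f_n$ of $C_0(K,X)$ automatically concentrates at its peak on $\supp\phi$, after which one simply rescales by $\phi(\sigma_n)$.
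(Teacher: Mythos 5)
Your proposal is correct, and while the direction (iii)$\Rightarrow$(i) is essentially the paper's argument (bump functions supported in the $\pi$-basis elements multiplied by BCP points of $X$; you only make explicit, via rational dilations and Lemma~\ref{lem1.1}, the rescaling of the $x_m$ that the paper performs implicitly by assuming $1<r_n<\|x_n\|$), the converse direction is genuinely different. The paper proves (i)$\Rightarrow$(ii): it recovers the BCP of $X$ by evaluating the covering centers $F_n$ at norm-attaining points and testing against the \emph{constant} functions $F_x(\tau)=x$, and it recovers the BCP of $C_0(K)$ through a rather delicate argument showing that $\{\pm m\|F_{n}(\cdot)\|\}$ are BCP points, using the decomposition $K=K^+\cup K^-$ and Lemma~\ref{lem1.1}. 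You instead prove (i)$\Rightarrow$(iii) directly: you extract a countable $\pi$-basis from the level sets $\{\tau:\|f_n(\tau)\|>\|f_n\|-1/k\}$, exactly mirroring the scalar argument of Theorem~\ref{thm1}, and you recover the BCP of $X$ by testing against $\phi x$ for a fixed compactly supported bump $\phi$ and rescaling by $\phi(\sigma_n)$ at the peak $\sigma_n$ of $\|f_n(\cdot)\|$ on $\supp\phi$. Both routes are legitimate given Theorem~\ref{thm1}, but yours buys two things: it bypasses the paper's most technical computation (the $\pm m\|F_n(\cdot)\|$ step), and it is more careful about non-compact $K$ — the paper's constant functions $F_x$ do not lie in $C_0(K,X)$ unless $K$ is compact, whereas your $\phi x$ always does, and your observation that $\sup_{\tau\notin\supp\phi}\|f_n(\tau)\|\le r_n<\|f_n\|$ forces the peak into $\supp\phi$ is exactly what makes the evaluation argument survive in the locally compact setting.
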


\begin{proof}
(i)$\Rightarrow$(ii): Suppose that $S_{C_0(K,X)}\subset\bigcup\limits_{n=1}^\infty B(F_n,r_n)$, where $r_n<\|F_n\|$ for all $n\in\mathbb N$. For each $n$, there exists $\tau_n\in K$ such that $\|F_n(\tau_n)\|=\|F_n\|$ since $F_n$ attains its norm on $K$.
We show that $\bigcup\limits_{n=1}^\infty B(F_n(\tau_n),r_n)$ is a countable ball covering of $S_X$ off the origin.	
	For $x\in S_X$, consider the function $F_x:K\rightarrow X$ given by $F_x(\tau)=x$. Obviously $F_x\in S_{C_0(K,X)}$ and there exists $n_0\in\mathbb N$ such that
$\|F_x-F_{n_0}\|\leq r_{n_0}$. Therefore
	\[\begin{aligned}
\|x-F_{n_0}(\tau_{n_0})\|&=\|(F_x-F_{n_0})(\tau_{n_0})\|\\
&\leq\|F_x-F_{n_0}\|\leq r_{n_0}<\|F_{n_0}(\tau_{n_0})\|.
\end{aligned}\]
Suppose that \[S_{C(K,X)}\subset\bigcup\limits_{n=1}^\infty B^\circ(F_n,\|F_n\|_{C_0(K,X)}).\] We will show that $\{\pm m\|F_n(\cdot)\|:m,n\in\mathbb N\}$ is a collection of BCP points of $C_0(K)$.
	For $f\in S_{C_0(K)}$, without loss of generality we may assume that there exists $\tau'\in K$ such that $f(\tau')=1$. Let $f^+=f\vee0$ and fix $x\in S_X$. Then $f^+(\cdot)x\in S_{C(K)}$ and there exists $n_0\in\mathbb N$ such that \[\|f^+(\cdot)x-F_{n_0}\|_{C_0(K,X)}<\|F_{n_0}\|_{C_0(K,X)}.\] Denote
	\[K^+=\{\tau\in K:f(\tau)\geq0\}, K^-=\{\tau\in K:f(\tau)\leq0\}.\]
	Since $f$ is continous, both $K^+$ and $K^-$ are closed and locally compact subsets of $K$. Thus
	\[\begin{aligned}
	\|f^+(\cdot)x-F_{n_0}\|_{C_0(K,X)}&=\max\left\{\max_{\tau\in K^+}\|f^+(\tau)x-F_{n_0}(\tau)\|_X,\max_{\tau\in K^-}\|f^+(\tau)x-F_{n_0}(\tau)\|_X\right\}\\
	&=\max\left\{\max_{\tau\in K^+}\|f^+(\tau)x-F_{n_0}(\tau)\|_X,\max_{\tau\in K^-}\|F_{n_0}(\tau)\|_X\right\}\\
	&<\|F_{n_0}\|_{C_0(K,X)}\\&=\max\left\{\max_{\tau\in K^+}\|F_{n_0}(\tau)\|_X,\max_{\tau\in K^-}\|F_{n_0}(\tau)\|_X\right\},
	\end{aligned}\]
	which implies that
	\[\max_{\tau\in K^+}\|F_{n_0}(\tau)\|_X>\max_{\tau\in K^-}\|F_{n_0}(\tau)\|_X.\]
	Therefore there exists a sufficiently large $m_0\in\mathbb N$ such that
	\[m_0\cdot\max_{\tau\in K^+}\|F_{n_0}(\tau)\|_X>m_0\cdot\max_{\tau\in K^-}\|F_{n_0}(\tau)\|_X+1.\]
	It suffices to show that
	\[\big\|f-m_0\|F_{n_0}(\cdot)\|\big\|_{C_0(K)}<\big\|m_0\|F_{n_0}(\cdot)\|\big\|_{C_0(K)}.\]
	For $\tau\in K^+$, it follows from Lemma \ref{lem1.1} that
	\[\begin{aligned}
	\big|f(\tau)-m_0\|F_{n_0}(\tau)\|\big|&=\big|\|f^+(\tau)x\|_X-m_0\|F_{n_0}(\tau)\|_X\big|\\
	&\leq\|f^+(\tau)x-m_0F_{n_0}(\tau)\|_X\\&\leq\|f^+(\cdot)x-m_0F_{n_0}\|_{C_0(K,X)}\\
	&<m_0\|F_{n_0}\|_{C_0(K,X)}\\&=\big\|m_0\|F_{n_0}(\cdot)\|\big\|_{C_0(K)}.
	\end{aligned}\]
	For $\tau\in K^-$ (note that $|f(\tau)|\leq1$),
	\[\begin{aligned}
	\big|f(\tau)-m_0\|F_{n_0}(\tau)\|\big|&=|f(\tau)|+m_0\|F_{n_0}(\tau)\|_X\\&\leq1+m_0\cdot\max_{\tau\in K^-}\|F_{n_0}(\tau)\|_X\\
	&<m_0\cdot\max_{\tau\in K^+}\|F_{n_0}(\tau)\|_X\\&=\big\|m_0\|F_{n_0}(\cdot)\|\big\|_{C_0(K)}.
	\end{aligned}\]

(ii)$\Leftrightarrow$(iii) by Theorem \ref{thm1}.

(iii)$\Rightarrow$(i):
Suppose that $K$ has a countable $\pi$-basis and $X$ has the BCP. Let $(U_n)$ be a countable $\pi$-basis for $K$ and $(x_n)$ be a sequence of BCP points for $X$ so that $\|x_n\|>1$ and $S_X\subset \cup B(x_n, r_n)$ with $1<r_n<\|x_n\|$ for each $n\in\mathbb{N}$. Let $(f_n)$ be a sequence of functions in $C(K)$ so that $\|f_n\|=1, f_n\geq 0$ and $\supp(f_n)\subset U_n$. We will show that $(f_n x_k)_{n, k}$ is a set of BCP points for $C_0(K, X)$. Let $g$ be any function in $C(K, X)$ with $\|g\|=1$. Since $g$ is continuous and $K$ is compact, there is an $x_g=g(\tau_0)$ for some $\tau_0\in K$ so that $\|x_g\|=1$. There is an $t\in\mathbb{N}$ such that $\|x_t-x_g\|\leq r_t$. Consider the open set $g^{-1}(B^\circ(x_g, \frac{\|x_t\|-r_t}{2}))$. Since $(U_n)$ is a $\pi$-basis of $K$, there exists $m\in\mathbb{N}$ so that \[U_m\subset g^{-1}\Big(B^\circ(x_g, \frac{\|x_t\|-r_t}{2})\Big).\] So we have
\begin{equation*}
\begin{split}
\|f_mx_t-g\|&=\max\{\sup_{\tau\in U_m}\|f_m(\tau)x_t-g(\tau)\|, \sup_{\tau\in U_m^c} \|f_m(\tau)x_t-g(\tau)\|\}\\
&=\max\{\sup_{\tau\in U_m}\|f_m(\tau)x_t-g(\tau)\|, \sup_{\tau\in U_m^c} \|g(\tau)\|\}\\
&\leq\max\{\sup_{\tau\in U_m}(\|f_m(\tau)x_t-x_g\|+\|x_g-g(\tau)\|), 1\}\\
&\leq\max\{\sup_{\tau\in U_m}(\|f_m(\tau)x_t-x_g\|+\frac{\|x_t\|-r_t}{2}), 1\}
\end{split}
\end{equation*}
If $f_m(\tau)=1$, then \[\|f_m(\tau)x_t-x_g\|=\|x_t-x_g\|\leq r_t.\] If $0\leq f_m(\tau)<1$, then
\begin{equation*}
\begin{split}
\|f_m(\tau)x_t-x_g\|&=\|f_m(\tau)(x_t-x_g)+(f_m(\tau)-1)x_g\|\\
&\leq\|f_m(\tau)(x_t-x_g)\|+\|(f_m(\tau)-1)x_g\|\\
&=f_m(\tau)\|x_t-x_g\|+(1-f_m(\tau))\\
&\leq f_m(\tau) r_t+(1-f_m(\tau))\\
&< f_m(\tau) r_t+(1-f_m(\tau))r_t\\
&=r_t
\end{split}
\end{equation*}
Therefore we get
$$
\|f_mx_t-g\|\leq \max\{(r_t+\frac{\|x_t\|-r_t}{2}), 1\}=\max\{\frac{\|x_t\|+r_t}{2}, 1\}<\|x_t\|.
$$
\end{proof}

By the above topological characterization of the BCP, it is easy to see that the (uniform) BCP preserves for continuous function spaces on topological product of compact Hausdorff spaces.

\begin{cor}
Let $\{\Omega_k\}$ be a finite or countable sequence of compact Hausdorff spaces. Let $\times \Omega_k$ be the product space. Then the followings are equivalent:
\begin{enumerate}
  \item[(i)] $\times \Omega_k$ has a countable $\pi$-basis;
  \item[(ii)] $C(\times \Omega_k)$ has the (uniform) BCP;
  \item[(iii)] $C(\Omega_k)$ has the (uniform) BCP for each $k$;
  \item[(iv)] $\Omega_k$ has a countable $\pi$-basis for each $k$.
\end{enumerate}
\end{cor}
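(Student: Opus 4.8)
The plan is to reduce the whole statement to the purely topological equivalence (i) $\Leftrightarrow$ (iv) and then feed it into Theorem \ref{thm1}. First I would note that, by Tychonoff's theorem, $\times\Omega_k$ is a compact (in particular locally compact) Hausdorff space with $C_0(\times\Omega_k)=C(\times\Omega_k)$, and likewise each $\Omega_k$ is compact Hausdorff. Applying Theorem \ref{thm1} with $K=\times\Omega_k$ yields the chain ``(i) $\Leftrightarrow$ $C(\times\Omega_k)$ has the BCP $\Leftrightarrow$ $C(\times\Omega_k)$ has the UBCP'', which is exactly (i) $\Leftrightarrow$ (ii); applying it with $K=\Omega_k$ for each $k$ gives (iii) $\Leftrightarrow$ (iv). So the task reduces to showing that $\times\Omega_k$ has a countable $\pi$-basis if and only if every $\Omega_k$ does. (We may assume every $\Omega_k$ is nonempty; otherwise the product is empty and all four conditions hold vacuously.)

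For (iv) $\Rightarrow$ (i) I would argue as follows. Fix for each $k$ a countable $\pi$-basis $\mathcal U_k=\{U_{k,j}:j\in\N\}$ of $\Omega_k$. Recall that a base for the product topology consists of the cylinders $\prod_k V_k$ with each $V_k$ open and $V_k=\Omega_k$ for all but finitely many $k$. Let $\mathcal W$ be the family of all sets of the form $\prod_{k\in F}U_{k,j_k}\times\prod_{k\notin F}\Omega_k$, where $F$ runs over the finite subsets of the index set and $(j_k)_{k\in F}$ over $\N^{F}$; each such set is a nonempty basic open set. Given any nonempty basic cylinder $\prod_k V_k$, with $V_k=\Omega_k$ outside a finite set $F$, choose for each $k\in F$ some $U_{k,j_k}\subseteq V_k$ from $\mathcal U_k$; the corresponding member of $\mathcal W$ then lies inside $\prod_k V_k$. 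Hence $\mathcal W$ is a $\pi$-basis, and it is countable because there are only countably many finite subsets $F$ of the (at most countable) index set, and for each of them only countably many tuples in $\N^{F}$. The finite-product case is the special instance in which $F$ is the whole index set.

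For (i) $\Rightarrow$ (iv), suppose $\{W_m:m\in\N\}$ is a countable $\pi$-basis of $\times\Omega_k$ and fix a coordinate $k_0$. Since the coordinate projection $\pi_{k_0}$ is an open surjection, each $\pi_{k_0}(W_m)$ is a nonempty open subset of $\Omega_{k_0}$. For any nonempty open $V\subseteq\Omega_{k_0}$, the preimage $\pi_{k_0}^{-1}(V)$ is nonempty and open, so it contains some $W_m$, and then $\pi_{k_0}(W_m)\subseteq V$. Thus $\{\pi_{k_0}(W_m):m\in\N\}$ is a countable $\pi$-basis of $\Omega_{k_0}$.

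I do not expect a genuine obstacle here. The only points needing a little care are the bookkeeping in (iv) $\Rightarrow$ (i) that the family $\mathcal W$ is really countable --- a countable union, indexed by finite subsets of the index set, of countable sets of tuples --- and, in (i) $\Rightarrow$ (iv), the use of the standard fact that coordinate projections out of a product space are open maps.
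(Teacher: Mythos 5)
Your proof is correct and follows exactly the route the paper intends: the paper states this corollary without proof, remarking only that it follows "by the above topological characterization," i.e.\ by Theorem \ref{thm1} together with the (standard) fact that a countable product of nonempty spaces has a countable $\pi$-basis if and only if each factor does. Your write-up simply supplies that omitted topological argument, and both directions (the countable family of finite cylinders built from the factor $\pi$-bases, and the openness of coordinate projections) are handled correctly.
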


Now we give a counterexample to Question 2 from our topological characterization of BCP on the spaces $C(K)$. Let L be the space of all real valued functions $f: \R \rightarrow \{0, 1\}$,
endowed with the pointwise topology, which can be viewed as a product space \[L=\{0,1\}^\R.\]
Then it is compact by  Tychonoff's theorem.

We claim that $L$ fails to have a countable $\pi$-basis. If not, there is
a countable $\pi$-basis $\{U_n\}$ of $L$. Let $W_r=\{f\in L : f(r)=1\}$
for $r\in\R$ be open sets in $L$. By the definition of $\pi$-basis, for every $r\in\R$ there is $n\in\N$
such that $U_n\subseteq W_r$. Then by a cardinality argument, there exist $n_0\in\N$
and an uncountable subset $A\subseteq\R$ such that
\[U_{n_0}\subseteq \bigcap_{r\in A} W_r.\]
This leads to a contradiction, since $\bigcap_{r\in A} W_r$ has empty interior for any infinite subset $A\subseteq \R$.

Please notice that $L$ is separable. Actually, it is easy to see that
the set of characteristic functions of finite unions of intervals with rational endpoints form a countable dense subset of $L$.

Now let us fix $\{h_n\}_{n\in\N}$ a countable dense subset of $L$, and let
\[K=L\times \{0\} \bigcup \Big\{\Big(h_n, \frac{1}{n}\Big):n\in\N\Big\}\subseteq L\times\R.\]
Clearly, it is a closed subset of $L\times [0,1]$, and therefore a compact space.
The points of the form $(h_n, \frac{1}{n})$ are isolated in $K$ and form a countable dense subset of $K$.
In particular, $K$ has a countable $\pi$-basis, made by the singletons $\{(h_n, \frac{1}{n})\}$.

\begin{remark}\label{remark3.1}
By Theorem \ref{thm1}, it follows that $C(K)$ has the (uniform) BCP, but $C(L)$
fails the BCP. If we prove that $C(L)$ is a $1$-complemented subspace of $C(K)$, then this gives a
counterexample to Question 2.
\end{remark}
\begin{proof}
Let $\alpha: K\rightarrow L$ be the projection onto the first coordinate,
that is, $\alpha(f,x)=f$ for all $(f,x)\in K.$ and $\beta: L\rightarrow K$
given by $\beta(f)=(f,0)$ for $f\in L.$ Both the above are continuous maps with
$\alpha \circ \beta =\mathrm{id}_L.$ Look at the corresponding composition operators
\[T_\alpha: C(L) \rightarrow C(K),\quad T_\alpha(h)=h\circ\alpha, \forall\, h\in C(L),\]
\[T_\beta: C(K) \rightarrow C(L),\quad T_\beta(g)=g\circ\beta, \forall\, g\in C(K).\]
It is easy to prove that these are operators of norm 1 and
\[T_\beta(T_\alpha(h))=T_\beta(h\circ \alpha)=h\circ \alpha \circ \beta=h, \forall\, h\in C(L).\]
That is $T_\beta \circ T_\alpha=\mathrm{id}_{C(L)}.$ Thus, $C(L)$
is a 1-complemented subspace of $C(K)$.
\end{proof}

\begin{remark}\label{remark3.2}
From the map $\beta$, we know that although $K$ is a compact Hausdorff space with a countable $\pi$-basis and $L$ is a topological quotient of $K$, L does not have a countable $\pi$-basis.
\end{remark}

An analogous stability result also holds for the strong BCP and the uniform BCP. Although the main idea of the proof is similar to that of Theorem \ref{thm2}, there are some technical differences. For the convenience of the readers, we include the proof here.

\begin{thm}\label{thm2.sbcp}
Let $K$ be a locally compact Hausdorff space and $X$ be a Banach space. Then
the following are equivalent:
\begin{enumerate}
  \item[(i)] $C_0(K, X)$ has the UBCP (SBCP);
  \item[(ii)] $C_0(K)$ and $X$ both have the UBCP (SBCP);
  \item[(iii)] $K$ has a countable $\pi$-basis and $X$ has the UBCP (SBCP).
\end{enumerate}
\end{thm}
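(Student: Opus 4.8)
The plan is to derive all three equivalences from Theorems~\ref{thm1} and~\ref{thm2} together with one normalization device. Recall that by definition the UBCP implies the SBCP and the SBCP implies the BCP; hence Theorem~\ref{thm1} already shows that, for $C_0(K)$, each of the BCP, the SBCP and the UBCP is equivalent to ``$K$ has a countable $\pi$-basis'', and Theorem~\ref{thm2} shows that if $C_0(K,X)$ has the BCP then $K$ has a countable $\pi$-basis and $X$ has the BCP. In particular (ii)$\Leftrightarrow$(iii) is immediate, because both assert that $X$ has the UBCP (SBCP) together with one of the equivalent properties ``$C_0(K)$ has the UBCP (SBCP)'' and ``$K$ has a countable $\pi$-basis''. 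So it suffices to prove (i)$\Rightarrow$(iii) and (iii)$\Rightarrow$(i); in each direction I would rerun the relevant half of the proof of Theorem~\ref{thm2} while carrying along two extra parameters: an upper bound $M$ on the radii (needed for the SBCP) and, for the UBCP, a number $\rho>0$ bounding below the distance from $0$ to every ball of the covering (which is exactly what a separating ball $B_0\ni0$ records).

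For (i)$\Rightarrow$(iii): Theorem~\ref{thm2} gives that $K$ has a countable $\pi$-basis, so the only thing to add is that $C_0(K,X)$ having the UBCP (SBCP) forces $X$ to have the UBCP (SBCP). Here I would repeat the evaluation argument from the first half of the proof of Theorem~\ref{thm2}: given a covering $S_{C_0(K,X)}\subset\bigcup_nB(F_n,r_n)$ off the origin with $r_n\le M$ (and, in the UBCP case, $\|F_n\|-r_n\ge\rho$), pick $\tau_n\in K$ with $\|F_n(\tau_n)\|=\|F_n\|$; then $\{B(F_n(\tau_n),r_n)\}_n$ covers $S_X$, its radii are still $\le M$, and $\|F_n(\tau_n)\|-r_n=\|F_n\|-r_n$ is still positive (resp. still $\ge\rho$). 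Thus $X$ has the SBCP (resp. UBCP), which together with the $\pi$-basis gives (iii).

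The implication (iii)$\Rightarrow$(i) is the one that requires a genuinely new ingredient. In the proof of Theorem~\ref{thm2} one silently replaces a ball covering $\{B(x_k,r_k)\}$ of $S_X$ by $\{B^\circ(x_k,\|x_k\|)\}$, that is, normalizes so that each radius equals the norm of the center; this makes the construction of the centers $\{f_nx_k\}_{n,k}$ work, but it is useless for the UBCP, since $0$ lies on the boundary of $B^\circ(x_k,\|x_k\|)$. Instead, I would normalize by pushing the centers \emph{outward}. Discarding balls that miss $S_X$ (so that $\|x_k\|\le1+M$), put $N:=2(1+M)$ and $\lambda_k:=N/\|x_k\|$ (note $\lambda_k\ge2$), and apply Lemma~\ref{lem1.1} in the form $\|y-\lambda_kx_k\|\le\|y-x_k\|+(\lambda_k-1)\|x_k\|$ to replace each $B(x_k,r_k)$ by $B(\lambda_kx_k,r_k')$ with $r_k':=r_k+N-\|x_k\|$. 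One checks that $\{B(\lambda_kx_k,r_k')\}_k$ still covers $S_X$, that every center has norm $N$, that the radii obey $1+M\le r_k'<N$, and --- this is the crucial point --- that $N-r_k'=\|x_k\|-r_k$, so that the distance from the origin to each ball is unchanged, and a uniform gap $\rho$ is preserved in the UBCP case. Now running the construction of Theorem~\ref{thm2}(iii)$\Rightarrow$(i) with this normalized covering (and $(f_n)\subset C_0(K)$ with $\|f_n\|=1$, $f_n\ge0$, $\supp f_n\subset U_n$ for a countable $\pi$-basis $(U_n)$ of $K$) yields, for every $g\in S_{C_0(K,X)}$, an estimate of the form $\|f_mx_t-g\|\le\max\{\tfrac{N+r_t}{2},1\}=\tfrac{N+r_t}{2}$ (using $N\ge2$); since each center $f_mx_t$ has norm $N$, the radii of this ball covering of $S_{C_0(K,X)}$ are bounded by $N$, and the distance from $0$ to its $(t,m)$-th ball is $\tfrac{N-r_t}{2}\ge\tfrac{\rho}{2}>0$ in the UBCP case. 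Therefore $C_0(K,X)$ has the SBCP (resp. UBCP).

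The step I expect to be the main obstacle is precisely this normalization in (iii)$\Rightarrow$(i): realizing that the ``radius $=$ norm of center'' trick, harmless for the BCP, must be traded for an outward translation of the centers, and verifying through Lemma~\ref{lem1.1} that such a translation keeps the radii uniformly bounded while leaving every ball at a fixed positive distance from the origin. Once that is in place, the rest is a careful but routine rereading of the proofs of Theorems~\ref{thm1} and~\ref{thm2} with the parameters $M$ and $\rho$ carried along.
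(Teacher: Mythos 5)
Your proposal is correct and follows essentially the same route as the paper: (ii)$\Leftrightarrow$(iii) via Theorem \ref{thm1}, (i)$\Rightarrow$(iii) by evaluating the covering functions at norm-attaining points, and (iii)$\Rightarrow$(i) by first using Lemma \ref{lem1.1} to push the centers of the covering of $S_X$ outward to a common norm ($N=2(1+M)$ in your notation, the paper's $r^{**}>2+2M$) while preserving the gap from the origin, and then rerunning the construction of Theorem \ref{thm2}. The normalization you single out as the key new ingredient is exactly the device the paper uses.
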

\begin{proof}
We deal with the UBCP case ($r^*=0$ corresponds to the case of SBCP).

(i)$\Rightarrow$(ii): By Theorem \ref{thm1} and \ref{thm2}, we only need to prove that $X$ has the UBCP. Suppose that $S_{C_0(K,X)}\subset\cup_{n=1}^\infty B(F_n,r_n)$, and there are constants $M >2$ such that $r_n<M$ for all $n\in\mathbb N$ and $r^{*}\ge 0$ ($r^{*}=0$ corresponds to the case of SBCP) satisfying \[\bigcup_{n=1}^\infty B(F_n,r_n)\bigcap\ B(0,r^{*})=\emptyset.\]
We can use Lemma \ref{lem1.1} and the fact $\{r_n\}_{n=1}^\infty$ is bounded to
obtain a constant $r^{**}$  and a sequence$(G_n)_{n=1}^\infty$ that satisfy $r^{**} > 2+2M$, $\|G_n\|=r^{**}$ and \[\bigcup_{n=1}^\infty B(G_n,r^{**}-r^{*})\bigcap\ B(0,r^{*})=\emptyset.\]   For each $n$, there exists $\tau_n\in K$ such that $\|G_n(\tau_n)\|=r^{**}$ since $G_n$ attains its norm on $K$. We wil show that$\cup_{n=1}^\infty B(G_n(\tau_n),r^{**}-r^{*})$ is a uniform ball covering of $S_X$ .	

	For $x\in S_X$, consider the function $F_x:K\rightarrow X$ given by $F_x(\tau)=x$. Obviously $F_x\in S_{C_0(K,X)}$ and there exists $n_0\in\mathbb N$ such that
$\|F_x-G_{n_0}\|\leq r^{**}-r^{*}$. Therefore
\[
\|x-G_{n_0}(\tau_{n_0})\|=\|(F_x-G_{n_0})(\tau_{n_0})\|
\leq\|F_x-G_{n_0}\|\leq r^{**}-r^{*}.\]

(ii)$\Leftrightarrow$(iii) by Theorem \ref{thm1} again.

(iii)$\Rightarrow$(i):
Suppose that $K$ has a countable $\pi$-basis and $X$ has the UBCP. Let $(U_n)$ be a countable $\pi$-basis for $K$ and $(x_n)$ be a sequence of UBCP points for $X$ so that $\|x_n\|=r^{**}$ and $S_X\subset \cup B(x_n,r^{**}-r^{*})$ . Let $(f_n)$ be a sequence of functions in $C_0(K)$ so that $\|f_n\|=1, f_n\geq 0$ and $\supp(f_n)\subset U_n$. We will show that $(f_n x_k)_{n, k}$ is a set of UBCP points for $C_0(K, X)$. Let $g$ be any function in $C_0(K, X)$ with $\|g\|=1$. Since $g$ is continuous and $K$ is compact, there is an $x_g=g(\tau_0)$ for some $\tau_0\in K$ so that $\|x_g\|=1$. There is an $t\in\mathbb{N}$ such that $\|x_t-x_g\|\leq r^{**}- r^{*}$. Consider the open set $g^{-1}(B^\circ(x_g, \frac{r^{*}}{2}))$. Since $(U_n)$ is a $\pi$-basis of $K$, there exists $m\in\mathbb{N}$ such that \[U_m\subset g^{-1}(B^\circ(x_g, \frac{r^{*}}{2})).\] So we have
\begin{equation*}
	\begin{split}
		\|f_mx_t-g\|
		&=\max\{\sup_{\tau\in U_m}\|f_m(\tau)x_t-g(\tau)\|, \sup_{\tau\in U_m^c} \|g(\tau)\|\}\\
		&\leq\max\{\sup_{\tau\in U_m}(\|f_m(\tau)x_t-x_g\|+\|x_g-g(\tau)\|), 1\}\\
		&\leq\max\{\sup_{\tau\in U_m}(\|f_m(\tau)x_t-x_g\|+\frac{r^{*}}{2}), 1\}
	\end{split}
\end{equation*}
If $f_m(\tau)=1$, then \[\|f_m(\tau)x_t-x_g\|=\|x_t-x_g\|\leq r^{**}-r^{*}.\] If $0\leq f_m(\tau)<1$, then
\begin{equation*}
	\begin{split}
		\|f_m(\tau)x_t-x_g\|&=\|f_m(\tau)(x_t-x_g)+(f_m(\tau)-1)x_g\|\\
		&\leq\|f_m(\tau)(x_t-x_g)\|+\|(f_m(\tau)-1)x_g\|\\
		&=f_m(\tau)\|x_t-x_g\|+(1-f_m(\tau))\\
		&\leq f_m(\tau) (r^{**}-r^{*})+(1-f_m(\tau))r^{**}-r^{*}\\
		&<r^{**}-r^{*}
	\end{split}
\end{equation*}
Therefore we get
\[
\|f_mx_t-g\|\leq \max\{(r^{**}-r^{*}+\frac{r^{*}}{2}), 1\}=r^{**}-\frac{r^{*}}{2}.
\]

\end{proof}

\section{BCP on Non-Commutative Spaces of Operators $\cB(X,Y)$}

Using the definition of the BCP, we have the following lemma, the proof of which will be omitted.

\begin{lemma}\label{lem3.1}
	Let $X$, $Y$ and $Z$ be normed spaces and assume that $X\subset Y\subset Z$. If the (uniform) BCP points of $Z$ can be chosen in $X$, then both $X$ and $Y$ have the (uniform) BCP.
\end{lemma}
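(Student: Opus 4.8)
The plan is to prove Lemma~\ref{lem3.1} by directly transferring a ball covering of $S_Z$ to ball coverings of $S_X$ and $S_Y$, exploiting the fact that all the norms agree on the smaller spaces (since the embeddings are isometric). Suppose $(z_n)$ is a sequence of (uniform) BCP points of $Z$ chosen in $X$, so that $S_Z \subset \bigcup_n B(z_n, r_n)$ with $r_n < \|z_n\|$ for all $n$, and in the uniform case with $\{r_n\}$ bounded and some $\rho > 0$ with $B(z_n, r_n) \cap B(0,\rho) = \emptyset$ for all $n$. I claim the same family $(z_n)$ witnesses the (uniform) BCP of both $X$ and $Y$.

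The argument for $Y$ is immediate: since $X \subset Y$, each $z_n$ lies in $Y$; since $S_Y \subset S_Z$ (the norm on $Y$ is the restriction of the norm on $Z$), every $y \in S_Y$ is covered by some $B(z_n, r_n)$, and the distance $\|y - z_n\|$ computed in $Y$ equals the one computed in $Z$. The radii conditions $r_n < \|z_n\|$, boundedness of $\{r_n\}$, and disjointness from $B(0,\rho)$ are all statements about the norms of the $z_n$ and the numbers $r_n$, none of which changes when we pass to the subspace. Hence $(z_n)$ is a (uniform) ball covering of $S_Y$ off the origin. The same reasoning applies verbatim to $X$, using $S_X \subset S_Z$; here it is essential that each $z_n$ actually belongs to $X$, which is exactly the hypothesis.

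There is no serious obstacle here — the lemma is essentially a bookkeeping observation — but the one point that genuinely requires the hypothesis (and is worth stating explicitly) is that the centers must be chosen \emph{inside} $X$: a ball covering of $S_Z$ with centers in $Z \setminus X$ gives no information about $S_X$, since those centers are not available as BCP points of $X$. For $Y$ the hypothesis is stronger than needed (centers in $X \subset Y$ suffice but so would centers in $Y$), but stating it uniformly for both is harmless. I would write the proof in two short paragraphs mirroring the two conclusions, or simply remark that, since $X \subset Y \subset Z$ isometrically and the prescribed centers lie in $X$, the same countable family of balls covers $S_X$ and $S_Y$ with the required radius constraints, so the conclusion follows directly from the definitions of BCP, SBCP, and UBCP.
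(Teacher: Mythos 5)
Your proof is correct; the paper omits the proof of this lemma as a routine consequence of the definitions, and your direct verification (the centers lie in $X\subset Y$, the isometric embeddings preserve all norms and distances, so the same balls cover $S_X$ and $S_Y$ with the same radius and disjointness constraints) is exactly the intended argument.
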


Now we show that $\mathcal B(\ell_p)$ $1<p<\infty$ has the BCP. Actually, every subspace containing finite rank operators in $\mathcal B(X, \ell_p)$ has the BCP.

\begin{thm}\label{thm3.2}
Let $X$ be a Banach space with $X^*$ separable. Then for $1<p<\infty$, any subspace $E$ of $B(X, \ell_p)$ containing $\cF(X, \ell_p)$ has the UBCP.
\end{thm}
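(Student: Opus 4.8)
The plan is to produce an explicit countable family of uniform BCP points lying inside $\cF(X,\ell_p)$, and then invoke Lemma \ref{lem3.1} with $X=\cF(X,\ell_p)\subset E\subset Z=\cB(X,\ell_p)$ (all three containing $\cF$) to transfer the UBCP to $E$. So the real content is to construct UBCP points of $\cB(X,\ell_p)$ that happen to be finite rank operators.

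First I would fix a countable norming family: since $X^*$ is separable, choose a countable dense subset $(x_k^*)$ of $S_{X^*}$, and let $(e_j)$ be the standard basis of $\ell_p$ with biorthogonal functionals $(e_j^*)$. The candidate BCP points will be built from the rank-one operators $x_k^*\otimes e_j$ (the map $x\mapsto x_k^*(x)e_j$), suitably scaled, together with finite combinations of these that exploit the $p$-summing/modular structure of $\ell_p$. The key geometric fact I want to use is that in $\ell_p$ for $1<p<\infty$, if one coordinate of a norm-one vector is close to $1$ in modulus, then all the other coordinates are small in an $\ell_p$-quantitative way; more precisely, if $v\in S_{\ell_p}$ and $|v_j|>1-\delta$ then $\big(\sum_{i\neq j}|v_i|^p\big)^{1/p}$ is small. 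This is the analogue, for $\ell_p$, of the role played in Theorem \ref{thm1} by the observation that a norm-one $g\in C_0(K)$ has $|g|>1/2$ on a nonempty open set.

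The main step: given $T\in S_{\cB(X,\ell_p)}$, pick $x\in S_X$ with $\|Tx\|_{\ell_p}$ close to $1$, and then pick a single coordinate $j$ and a scalar $\theta$ so that $|e_j^*(\theta Tx)|$ is close to $\|Tx\|_{\ell_p}$ (this requires first truncating $Tx$ to finitely many coordinates, which is where $1<p<\infty$ and the unconditionality of $(e_j)$ enter); then choose $x_k^*$ close to the functional $x^*$ that witnesses $\|x\|=1$ in the appropriate sense. For a large scalar $\lambda$, I claim the finite-rank operator $\lambda\,\theta\, x_k^*\otimes e_j$ (or a small finite perturbation thereof along finitely many coordinates $e_1,\dots,e_N$, to absorb the truncation error) is within distance $<\lambda$ of $T$: one estimates $\|T-\lambda\theta x_k^*\otimes e_j\|$ by testing on $S_X$, splitting any $y\in S_X$ according to whether $|x_k^*(y)|$ is near $1$ or not, exactly in the spirit of the $K^+$/$K^-$ split in the proof of Theorem \ref{thm2}. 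The radii are all comparable to $\lambda$ and bounded away from $0$ uniformly, and one fixed ball $B(0,\lambda/2)$ around the origin meets none of them, giving the UBCP rather than merely the BCP. Enumerating over $(k,j,\theta$ from a countable dense set of unimodular scalars$,N,$ and rational perturbation coefficients$)$ yields a countable family in $\cF(X,\ell_p)$.

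The hard part will be the coordinate-truncation estimate: unlike in $C_0(K)$, where a norm-one function literally attains a value of modulus $1$ at a point and the sup-norm splits cleanly across a clopen-type decomposition, here $\|Tx\|_{\ell_p}$ is spread over infinitely many coordinates and the $\ell_p$-norm does not decompose as a max. I expect to need a careful choice, depending on $T$, of how many coordinates $N$ to keep and how large $\lambda$ must be, so that the tail $\big(\sum_{i>N}|(Tx)_i|^p\big)^{1/p}$ and the cross terms in $\|(T-\lambda\theta x_k^*\otimes e_j)y\|_{\ell_p}^p$ are controlled simultaneously for \emph{all} $y\in S_X$, not just for the chosen $x$. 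Making this uniform in $y$ while keeping the perturbing operator finite rank — so that the BCP points genuinely live in $\cF(X,\ell_p)$ as Lemma \ref{lem3.1} requires — is the delicate point; once it is done, the passage to $E$ and the verification of the uniform-separation condition are routine.
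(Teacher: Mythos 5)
Your overall architecture---a countable family of UBCP points lying in $\cF(X,\ell_p)$, the reduction via Lemma \ref{lem3.1} applied to $\cF(X,\ell_p)\subset E\subset\cB(X,\ell_p)$, a countable dense subset $(x_k^*)$ of $B_{X^*}$ supplying the functionals, and a fixed scaling factor $\lambda>1$---is exactly the paper's. But the concrete candidate set you propose rests on a false premise. For a general $T\in S_{\cB(X,\ell_p)}$ and a near-norming $x\in S_X$, no single coordinate of $Tx$ need be anywhere near $\|Tx\|_p$, and truncating to finitely many coordinates does not change this: the norm-one vector $N^{-1/p}(e_1+\cdots+e_N)$ has every coordinate equal to $N^{-1/p}$. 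Concretely, take $X=\ell_p$ and $T_N y=N^{-1/p}y_1(e_1+\cdots+e_N)$, so $\|T_N\|=1$; for the natural rank-one candidate $\lambda e_1^*\otimes e_1$ one computes $\|T_N-\lambda e_1^*\otimes e_1\|^p=(\lambda-N^{-1/p})^p+\tfrac{N-1}{N}$, which exceeds $\lambda^p=\|\lambda e_1^*\otimes e_1\|^p$ as soon as $N^{1/p}\tfrac{N-1}{N}>p\lambda^{p-1}$. So with $\lambda$ fixed, rank-one centers cannot cover the sphere, and the "small finite perturbation to absorb the truncation error" is a misdiagnosis: what is needed is not a rank-one main term plus a small correction but the full finite-rank block $\lambda\sum_{i=1}^{t_0}x_{m_i}^*\otimes e_i$, in which \emph{each} row functional $e_i^*T$ is separately approximated by its own element $x_{m_i}^*$ of the dense set. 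Your enumeration (one index $k$, one coordinate $j$, rational coefficients) does not produce these operators.

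The estimate you sketch is also aimed at the wrong mechanism. Splitting $y\in S_X$ according to whether $|x_k^*(y)|$ is near $1$, in analogy with the $K^+/K^-$ split of Theorem \ref{thm2}, plays no role here; the sup-norm of $C_0(K)$ decomposes as a maximum over a partition of the \emph{domain}, whereas the $\ell_p$ norm decomposes additively over a partition of the \emph{range coordinates}. The paper's estimate is uniform in $y$: writing $T=\sum_n e_n^*T\otimes e_n$ (SOT), it chooses $t_0$ with $\theta:=\|\sum_{i\le t_0}e_i^*T\otimes e_i\|>\max\{1/2,\lambda^{1-p}\}$, approximates each $e_i^*T/\theta$ by $x_{m_i}^*$, and uses $\|u+v\|_p^p=\|u\|_p^p+\|v\|_p^p$ for disjointly supported $u,v$ to get, for every $y\in B_X$ at once, $\bigl\|\bigl(T-\tfrac{\lambda}{\theta}\sum_{i\le t_0}e_i^*T\otimes e_i\bigr)y\bigr\|^p=\bigl(|1-\lambda/\theta|^p-1\bigr)\sum_{i\le t_0}|e_i^*Ty|^p+\|Ty\|^p$, which is then bounded by $\lambda^p(1-\theta^p/\lambda^p)$ up to the approximation error. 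No case analysis on the domain occurs. In short, the skeleton of your proposal matches the paper, but the heart of the argument---which finite-rank operators to use and why the $\ell_p$-additivity lets you beat the radius uniformly---is missing and cannot be recovered from the single-coordinate reduction you describe.
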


\begin{proof}
By Lemma \ref{lem3.1}, it is enough to show that $\cB(X, \ell_p)$ has the UBCP. Let $(e_n)$ be the standard unit vector basis for $\ell_p$ and $(e_n^*)$ be the standard unit vector basis for $\ell_q$, where $1/p+1/q=1$. Since $X^*$ is separable, there exists a countable dense subset $(x_n^*)$ in the unit closed ball of $X^*$. Let $\lambda>1$. We will show that the countable set $\mathcal{A}$ defined by
\[ \Big\{ \lambda \sum_{i=1}^{k} x_{m_i}^*\otimes e_i \ :\frac{4-(1-\frac{1}{(2\lambda)^p})^{\frac{1}{p}}}{3} >\| \sum_{i=1}^{k} x_{m_i}^*\otimes e_i  \|>  \frac{2+(1-\frac{1}{(2\lambda)^p})^{\frac{1}{p}}}{3},  k, {m_i}\in\mathbb{N} \Big\}\]
is a set of UBCP points for $\mathcal B(X, \ell_p)$. Let $T$ be an operator in $\mathcal B(X, \ell_p)$ with $\|T\|=1$. Then $T=\sum_n e_n^* T\otimes e_n,$ where the series converges in the strong operator topology. Since $\|T\|=1$, there is a $t_0\in\mathbb{N}$ so that
$$
\max\{\frac{1}{2}, \frac{1}{\lambda^{p-1}}\}<\theta=\|\sum_1^{t_0}e_i^* T\otimes e_i\|\leq 1.
$$
Choose $\epsilon>0$ such that
$$
\epsilon<\frac{1-\big(1-\frac{1}{(2\lambda)^p}\big)^{\frac{1}{p}}}{2}.
$$
Since $\big\| \frac{e_i^*T}{\theta} \big\| \leq 1$ for each $i\in\mathbb{N}$, we can find  $m_i\in\mathbb{N}$ so that
$$
 \big\|x_{m_i}^*-\frac{e_i^*T}{\theta}\big\|<\min\Big(\frac{\epsilon}{t_0},\frac{1-(1-\frac{1}{(2\lambda)^p})^{\frac{1}{p}}}{3{t_0}}\Big).
$$
By triangular inequality and a regular computation, it is not difficult to see that
$\lambda\sum_1^{t_0} x_{m_i}^*\otimes e_i$ is in set $\mathcal{A}$. We will prove that \[\|T-\lambda\sum_1^{t_0}x_{m_i}^* T\otimes e_i\|<\lambda\big((1-\frac{\theta^p}{\lambda^p})^{\frac{1}{p}}+\epsilon\big).\] Actually we have
\begin{equation*}
\begin{split}
&\|T-\lambda \sum_1^{t_0}x_{m_i}^* T\otimes e_i\|\\
&=\|\sum_n e_n^* T\otimes e_n-\lambda \sum_1^{t_0}x_{m_i}^* T\otimes e_i\ \|\\
&=\|(1-\frac{\lambda}{\theta}) \sum_1^{t_0}x_{m_i}^* T\otimes e_i\ +\sum_{i> {t_0}} e_i^* T\otimes e_i \| + \|  \lambda\sum_1^{t_0} (\frac{e_i^*T}{\theta}\otimes e_i-x_{m_i}^*\otimes e_i)\|\\
&\leq \|(1-\frac{\lambda}{\theta}) \sum_1^{t_0} e_{m_i}^*T\otimes e_i+\sum_{i>{t_0}} e_i^* T\otimes e_i\|+ +\sum_1^{t_0}\| \lambda(\frac{e_i^*T}{\theta}-x_{m_i}^*)\otimes e_i\|\\
&<\sup_{x\in B_X}\|(1-\frac{\lambda}{\theta}) \sum_1^{t_0} e_{m_i}^*T\otimes e_i (x) +\sum_{i>{t_0}} e_i^* T(x) e_i\|+\lambda\epsilon\\
&= \sup_{x\in B_X}\big(|1-\frac{\lambda}{\theta}|^p |\sum_1^{t_0} e_{m_i}^*T\otimes e_i (x)|^p+\sum_{i>{t_0}} |e_i^*T(x)|^p\big)^{\frac{1}{p}}+\lambda\epsilon\\
&\leq \sup_{x\in B_X}\big((|1-\frac{\lambda}{\theta}|^p-1) \sum_1^{t_0} |e_i^*T(x)|^p+\sum_{i} |e_i^*T(x)|^p\big)^{\frac{1}{p}}+\lambda\epsilon\\
&\leq \sup_{x\in B_X}\big((|1-\frac{\lambda}{\theta}|^p-1) \sum_1^{t_0} |e_i^*T(x)|^p+\|Tx\|^p\big)^{\frac{1}{p}}+\lambda\epsilon\\
&\leq \big((|1-\frac{\lambda}{\theta}|^p-1) \theta^p+1\big)^{\frac{1}{p}}+\lambda\epsilon\\
&= \big((|\theta-\lambda|^p-\theta^p+1\big)^{\frac{1}{p}}+\lambda\epsilon\\
&= \big(\lambda^p( (1-\frac{\theta}{\lambda})^p-\frac{\theta^p}{\lambda^p}+\frac{1}{\lambda^p})\big)^{\frac{1}{p}}+\lambda\epsilon\\
&\leq\big(\lambda^p( (1-\frac{\theta}{\lambda})-\frac{\theta^p}{\lambda^p}+\frac{1}{\lambda^p})\big)^{\frac{1}{p}}+\lambda\epsilon\\
&=\big(\lambda^p( (1-\frac{\theta^p}{\lambda^p})+(\frac{1}{\lambda^p}-\frac{\theta}{\lambda}))\big)^{\frac{1}{p}}+\lambda\epsilon\\
&<\big(\lambda^p( (1-\frac{\theta^p}{\lambda^p})\big)^{\frac{1}{p}}+\lambda\epsilon\\
&=\lambda(1-\frac{\theta^p}{\lambda^p})^{\frac{1}{p}}+\lambda\epsilon\\
&=\lambda\big((1-\frac{\theta^p}{\lambda^p})^{\frac{1}{p}}+\epsilon\big)\\
&<\lambda((1-\frac{1}{(2\lambda)^p})^{\frac{1}{p}}+\frac{1-(1-\frac{1}{(2\lambda)^p})^{\frac{1}{p}}}{2})\\
&=\lambda\cdot \frac{1+(1-\frac{1}{(2\lambda)^p})^{\frac{1}{p}}}{2}.
\end{split}
\end{equation*}
This finishes the proof.

\end{proof}

\begin{cor} Let $1<p<\infty$.
Then every subspace of $\mathcal B(\ell_p)$ containing $\mathcal F(\ell_p)$ has the UBCP.
\end{cor}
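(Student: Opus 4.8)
The plan is to obtain this as an immediate specialization of Theorem \ref{thm3.2}. I would set $X=\ell_p$ in that theorem. The only hypothesis to check is that $X^\ast$ is separable: since $1<p<\infty$, the dual of $\ell_p$ is $\ell_q$ with $1/p+1/q=1$ and $1<q<\infty$, so $\ell_q$ is separable (the finitely supported vectors with rational coordinates are dense). Hence the hypothesis of Theorem \ref{thm3.2} is satisfied.

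With this choice, $\cB(X,\ell_p)=\cB(\ell_p,\ell_p)=\cB(\ell_p)$ and $\cF(X,\ell_p)=\cF(\ell_p,\ell_p)=\cF(\ell_p)$. Theorem \ref{thm3.2} then asserts that any subspace $E$ with $\cF(\ell_p)\subseteq E\subseteq\cB(\ell_p)$ has the UBCP, which is exactly the claim. So the proof is a one-line deduction, and there is no genuine obstacle here: all the work has already been done in establishing Theorem \ref{thm3.2} (the explicit construction of the countable family $\mathcal{A}$ of rank-finite UBCP points built from a countable dense subset of $B_{X^\ast}$, together with Lemma \ref{lem3.1} to push the UBCP down from $\cB(\ell_p)$ to the intermediate subspace $E$). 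If one wished to make the statement self-contained, the only thing worth spelling out is the separability of $\ell_q$, but this is standard.

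\begin{proof}
Since $1<p<\infty$, the dual space $(\ell_p)^\ast=\ell_q$ (with $1/p+1/q=1$) is separable. Applying Theorem \ref{thm3.2} with $X=\ell_p$, every subspace $E$ of $\cB(\ell_p,\ell_p)=\cB(\ell_p)$ containing $\cF(\ell_p,\ell_p)=\cF(\ell_p)$ has the UBCP.
\end{proof}
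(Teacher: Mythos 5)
Your deduction is correct and is exactly how the paper obtains this corollary: it is stated immediately after Theorem \ref{thm3.2} with no separate proof, the intended specialization being $X=\ell_p$ together with the separability of $(\ell_p)^*=\ell_q$ for $1<p<\infty$.
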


For a complex separable Hilbert space $H$, we have a brief proof to obtain that any complex operator subspace $E$ of $\mathcal B(H)$
containing $\cF(H)$ has the UBCP,
which is the non-commutative analogue of $\ell_\infty$ with the UBCP. It replies on
complex operator algebra fundamental techniques. Indeed,
Let $\{b_n\}_n$ be a dense sequence in the unit sphere $S_1(H)$.
Then $\{\lambda b_n\otimes b_m\}_{n,m}$ is the UBCP points
with $1<\lambda <2$, and \[S_1(E)\subset \bigcup_{n,m} \mathrm{B}(\lambda b_n\otimes b_m, 1)\]
is the corresponding uniform countable ball covering.
Let $A\in S_1(E)$ with the Polar Decomposition $A=U|A|$ where $U$ is a unitary and $|A|$ is positive.
By the Spectral Theorem, there is $f\ge 0$ in some $L^\infty(\mu)$-space and a unitary $V: L^2(\mu)\rightarrow \ell_2$
such that $|A|=V M_f V^*.$
Then for any $\epsilon>0$ there is finite orthogonal projections
$P_k$ and $0<p_k\le 1$
such that \[\big\||
A|-\sum^N_{k=1} p_k P_k\big\|<\epsilon.\]
Without loss of generality, we assume that $|p_1-1|<\epsilon$ and $P_1$
is rank one, that is $P_1=x\otimes x$ with $x\in S_1(H)$.
Thus, 
\begin{eqnarray*}
U\Big(\sum^N_{k=1} p_k P_k\Big)&=&U(p_1 x\otimes x+\sum^N_{k=2} p_k P_k)\\
&=&p_1 U(x)\otimes x +
\sum^N_{k=2} p_k U P_k.
\end{eqnarray*}
There are $b_n,b_m$ such that $\|b_n-U(x)\|<\epsilon$
and $\|b_m-x\|<\epsilon$, then we have
\begin{eqnarray*}
  \|A-\lambda b_n\otimes b_m\| &\le&
  \Big\|U(\sum^N_{k=1} p_k P_k)-\lambda b_n\otimes b_m\Big\| +\epsilon \\
   &=& \Big\|(p_1-\lambda)x\otimes x+\sum^N_{k=2} p_k P_k+\lambda(x\otimes x- U^{-1}(b_n)\otimes b_m) \Big\| + \epsilon\\
   &\le& \Big\|(p_1-\lambda)x\otimes x+\sum^N_{k=2} p_k P_k\Big\| + (2\lambda+1)\epsilon \\
   &\le& 1+   (2\lambda+2)\epsilon 
\end{eqnarray*}
\begin{remark}
It is well known that $L_\infty[0, 1]$ is completely isometrically isomorphic to a completely 1-complemented subspace of $\mathcal B(H)$
in the operator space sense (\cite{GMS}).
Since $L_\infty[0, 1]$ fails the BCP \cite{LZh1}, Theorem \ref{thm3.2} tells us that $\cB(H)$ and $L_\infty[0, 1]$ form a counterexample for Question 2 in the introduction.
\end{remark}

Next, we present some necessary conditions for $\cB(X,Y)$ with the BCP.

\begin{thm}\label{thm3.1}
	Let $X$ and $Y$ be Banach spaces. If $\mathcal B(X,Y)$ has the BCP, then both $X^*$ and $Y$ have the BCP.
\end{thm}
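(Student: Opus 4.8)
The plan is to manufacture a countable ball covering of $S_Y$ (and, by a mirror argument, of $S_{X^*}$) out of a given one for $S_{\cB(X,Y)}$. The starting observation is that $Y$ and $X^*$ embed isometrically into $\cB(X,Y)$ via rank-one operators: for fixed $y_0\in S_Y$ and $x_0^*\in S_{X^*}$, the maps $y\mapsto x_0^*\otimes y$ and $x^*\mapsto x^*\otimes y_0$ are isometries (indeed $1$-complemented copies). Since the later part of the paper shows BCP need not pass even to $1$-complemented subspaces, a plain projection argument cannot work; the extra structure I will exploit is that the norm of an operator $T$ is approximated by the numbers $\|Tx\|$, $x\in S_X$ (and dually by $\|\eta\circ T\|$, $\eta\in S_{Y^*}$), together with the fact that if $T$ is close to $x_0^*\otimes y$, then at any $x$ with $\|Tx\|$ near $\|T\|$ the scalar $x_0^*(x)$ cannot be small.

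For $Y$: as in the proofs above, BCP of $\cB(X,Y)$ gives nonzero operators $(T_n)$ with $S_{\cB(X,Y)}\subseteq\bigcup_n B^\circ(T_n,\|T_n\|)$. Fix $x_0\in S_X$ and $x_0^*\in S_{X^*}$ with $x_0^*(x_0)=1$, and for each $n$ fix a sequence $(x^{(n)}_j)_j\subseteq S_X$ with $\|T_nx^{(n)}_j\|\to\|T_n\|$. For $n\in\N$ and $\delta\in\Q\cap(0,\|T_n\|)$ pick $j=j(n,\delta)$ with $\|T_nx^{(n)}_j\|>\delta$; whenever $x_0^*(x^{(n)}_j)\neq0$ put
\[
c_{n,\delta}=\frac{T_nx^{(n)}_j}{x_0^*(x^{(n)}_j)}\in Y,\qquad \rho_{n,\delta}=\frac{\delta}{|x_0^*(x^{(n)}_j)|},
\]
so that $\rho_{n,\delta}<\|c_{n,\delta}\|$ (because $\delta<\|T_nx^{(n)}_j\|$) and $B(c_{n,\delta},\rho_{n,\delta})$ misses the origin. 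I claim these countably many balls cover $S_Y$. Given $y\in S_Y$, the operator $x_0^*\otimes y$ lies in $S_{\cB(X,Y)}$, hence in some $B^\circ(T_n,\|T_n\|)$; set $\delta_n=\|x_0^*\otimes y-T_n\|<\|T_n\|$ and choose $\delta\in\Q$ with $\delta_n\le\delta<\|T_n\|$. Evaluating $\|x_0^*\otimes y-T_n\|=\delta_n$ at $x^{(n)}_{j(n,\delta)}$ gives $\|x_0^*(x^{(n)}_j)\,y-T_nx^{(n)}_j\|\le\delta_n$, whence $|x_0^*(x^{(n)}_j)|\ge\|T_nx^{(n)}_j\|-\delta_n>0$ (so $c_{n,\delta}$ is defined) and
\[
\|y-c_{n,\delta}\|=\frac{\|x_0^*(x^{(n)}_j)\,y-T_nx^{(n)}_j\|}{|x_0^*(x^{(n)}_j)|}\le\frac{\delta_n}{|x_0^*(x^{(n)}_j)|}\le\rho_{n,\delta}.
\]
Thus $y$ lies in a retained ball, and $Y$ has the BCP.

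For $X^*$: run the same construction on the dual side. Fix $y_0\in S_Y$ and $y_0^*\in S_{Y^*}$ with $y_0^*(y_0)=1$, and using $\|T_n\|=\sup_{\eta\in S_{Y^*}}\|\eta\circ T_n\|$ fix $(\eta^{(n)}_j)_j\subseteq S_{Y^*}$ with $\|\eta^{(n)}_j\circ T_n\|\to\|T_n\|$. Setting $d_{n,\delta}=(\eta^{(n)}_{j}\circ T_n)/\eta^{(n)}_{j}(y_0)\in X^*$ for suitable $j=j(n,\delta)$, and using the identity $\eta\circ(x^*\otimes y_0)=\eta(y_0)\,x^*$ together with $\|\eta\circ S\|\le\|S\|$ for $\|\eta\|=1$, the same computation shows every $x^*\in S_{X^*}$ (approached through $x^*\otimes y_0\in S_{\cB(X,Y)}$) lies in a retained ball off the origin of $X^*$. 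Hence $X^*$ has the BCP.

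The step that needs care is the one just highlighted: the naive center $T_nx_0$ fails because $\|T_nx_0\|$ can be much smaller than $\|T_n\|$, so $B(T_nx_0,\|T_n\|)$ need not avoid the origin — this is precisely the obstruction reflecting that $Y$ is merely a $1$-complemented subspace of $\cB(X,Y)$. The remedy is to pass from $x_0$ to a near-norm-attaining vector $x^{(n)}_j$ of $T_n$, where $x_0^*$ is automatically bounded below, and to rescale; the one piece of bookkeeping is letting $\delta$ range over $\Q\cap(0,\|T_n\|)$ — a proxy for the $y$-dependent number $\delta_n$ — so that the rescaled radius $\rho_{n,\delta}$ stays strictly below $\|c_{n,\delta}\|$.
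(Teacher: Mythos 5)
Your proof is correct, and it follows the same overall strategy as the paper's: embed $Y$ (resp.\ $X^*$) into $\cB(X,Y)$ via rank-one operators, evaluate the covering inequality at a vector where $T_n$ (resp.\ $T_n^*$) nearly attains its norm, and rescale by the scalar that the chosen functional assigns to that vector. The only genuine divergence is in how you guarantee that this scalar is nonzero. The paper picks, for each $n$, one $x_n\in S_X$ with $\|T_nx_n\|>r_n$ and then invokes the Baire Category Theorem to produce a \emph{single} functional $g\in S_{X^*}$ avoiding every hyperplane $x_n^{\perp}$, so that the centers $T_nx_n/g(x_n)$ with radii $r_n/|g(x_n)|$ work with one ball per $n$ (and dually a single $y\in S_Y$ off $\bigcup_n\ker g_n$ for the $X^*$ part). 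You instead fix $x_0^*$ arbitrarily in advance, accept that it may vanish at some near-norm-attaining vectors, and compensate with the quantitative observation that if $\|x_0^*\otimes y-T_n\|=\delta_n$ and $\|T_nx\|>\delta_n$ then automatically $|x_0^*(x)|\ge\|T_nx\|-\delta_n>0$; the price is the auxiliary countable index $\delta\in\Q\cap(0,\|T_n\|)$ serving as a proxy for the $y$-dependent $\delta_n$. Both routes produce a countable family of balls off the origin; yours dispenses with Baire category and is in that sense more elementary and self-contained, while the paper's choice of a single universal $g$ keeps the index set and the bookkeeping lighter. Your mirror argument for $X^*$, using $\eta\circ T=T^*\eta$ and $\eta\circ(x^*\otimes y_0)=\eta(y_0)x^*$, corresponds exactly to the paper's passage to adjoints.
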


\begin{proof}

Suppose that $S_{\mathcal B(X,Y)}\subset\bigcup\limits_{n=1}^\infty B(T_n,r_n)$, where $r_n<\|T_n\|$ for all $n\in\mathbb N$.
For every $T_n$, there is $x_n\in S_{X}$ such that $\|T_n{x_n}\|>r_n$. It follows from the completeness of $Y$ and the Baire Category Theorem that there exists \[g\in S_{X^*}\backslash\bigcup\limits_{n=1}^\infty x_n^\perp, \mbox{ where } x_n^\perp=\{f\in X^*: f(x_n)=0\}.\] We show that $\left\{\frac{T_nx_n}{g(x_n)}:n\in\mathbb N\right\}$ is a collection of BCP points of $Y$.
	For $y\in S_Y$, consider the operator $R_y: X\rightarrow Y$ defined by for $x\in X$, \[R_y(x)=g(x)y.\] There is $n_0\in\mathbb N$ such that $\|R_y-T_{n_0}\|\leq r_{n_0}$. Therefore,
	\[\Big\|y-\frac{T_{n_0}x_{n_0}}{g(x_{n_0})}\Big\|= \frac{\|R_y x_{n_0}-T_{n_0}x_{n_0}\|}{|g(x_{n_0})|}
\leq\frac{r_{n_0}}{|g(x_{n_0})|}
<\Big\|\frac{T_{n_0}x_{n_0}}{g(x_{n_0})}\Big\|.\]

Let $T_n^*:Y^*\rightarrow X^*$ denote the adjoint operator of $T_n$. For every $T_n^*$, there is $g_n\in S_{Y^*}$ such that
$\|T_n^*g_n\|>r_n$. Similarly, by the Baire Category Theorem, there exists \[y\in S_Y\backslash\bigcup\limits_{n=1}^\infty\ker g_n.\] We show that $\left\{\frac{T_n^*g_n}{g_n(y)}:n\in\mathbb N\right\}$ is a collection of BCP points of $X^*$.
	For $f\in S_{X^*}$, consider the operator $L_f:X\rightarrow Y$ defined by for $x\in X$,\[L_f(x)=f(x)y.\] There is $n_0\in\mathbb N$ such that
$\|L_f-T_{n_0}\|\leq r_{n_0}$. Therefore,
\begin{eqnarray*}	
\|f(x)g_{n_0}(y)-(T_{n_0}^*g_{n_0})(x)\|
&\leq&\|L_f(x)-T_{n_0}x\|\\
&\leq& r_{n_0}\|x\|<\|T_{n_0}^*g_{n_0}\|\cdot\|x\|,
\end{eqnarray*}
	that is,
	\[\Bigg\|f-\frac{T_{n_0}^*g_{n_0}}{g_{n_0}(y)}\Bigg\|\leq \frac{r_{n_0}}{|g_{n_0}(y)|}<\Bigg\|\frac{T_{n_0}^*g_{n_0}}{g_{n_0}(y)}\Bigg\|.\]
Then we complete the proof.
	
\end{proof}

Although $\ell_{\infty}$ has the BCP, it was proved in \cite{LZh1} that $L_{\infty}[0, 1]$ fails the BCP. So the following corollary is an immediate consequence of Theorem \ref{thm3.1}.

\begin{cor}
$\mathcal B(L_1[0,1])$ fails the BCP.
\end{cor}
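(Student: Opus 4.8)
The plan is to obtain this immediately from Theorem \ref{thm3.1} by taking $X=Y=L_1[0,1]$. Suppose, toward a contradiction, that $\cB(L_1[0,1])=\cB(L_1[0,1],L_1[0,1])$ has the BCP. Then Theorem \ref{thm3.1} (with $X=Y=L_1[0,1]$) forces $X^{*}=(L_1[0,1])^{*}$ to have the BCP. So the only two ingredients I need are: (a) the duality identification $(L_1[0,1])^{*}\cong L_\infty[0,1]$ (isometrically), which is classical, and (b) the fact, proved in \cite{LZh1}, that $L_\infty[0,1]$ fails the BCP. Combining (a) and (b) contradicts the conclusion $X^{*}$ has the BCP, so $\cB(L_1[0,1])$ cannot have the BCP.

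There is essentially no obstacle here: the entire content is packaged in Theorem \ref{thm3.1} and in the already-cited failure of the BCP for $L_\infty[0,1]$. The only point worth stating carefully is that the hypothesis of Theorem \ref{thm3.1} is applied with the \emph{same} space in both slots, so that the necessary condition ``$X^{*}$ has the BCP'' becomes precisely ``$L_\infty[0,1]$ has the BCP'', which is false. (Note that the other necessary condition, ``$Y=L_1[0,1]$ has the BCP'', is in fact satisfied since $L_1[0,1]$ is separable, so it is genuinely the dual condition that is doing the work.) I would phrase the proof in one or two sentences, exactly as the surrounding corollaries in the paper are phrased.
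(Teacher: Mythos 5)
Your proof is correct and is exactly the paper's argument: the paper derives this corollary immediately from Theorem \ref{thm3.1} together with the fact from \cite{LZh1} that $L_\infty[0,1]\cong(L_1[0,1])^*$ fails the BCP. No gaps.
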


For the BCP of $\cB(\ell_1)$ and $\cB(c_0)$ we need the stability of vector-valued sequence spaces and tensor products of Banach spaces.
Let $X$ and $Y$ be two Banach spaces. We use $X\hat{\otimes}_\pi Y$ to denote the projective tensor product. If $X$ is isometrically isomorphic to $Y$, then we write $X\cong Y$.
Let $X \otimes Y$ be the tensor product of the Banach space $X$ and $Y$. The projective norm $\|\cdot\|_\pi$ on $X \otimes Y$ is defined by:
$$
\|u\|_{\pi}=\inf \left\{\sum_{i=1}^{n}\left\|x_{i}\right\|\left\|y_{i}\right\|: u=\sum_{i=1}^{n} x_{i} \otimes y_{i}\right\}
$$
We will use $X \otimes_{\pi} Y$ to denote the tensor product $X \otimes Y$ endowed with the projective norm $\|\cdot\|_{\pi} .$ Its completion will be denoted by $X \hat{\otimes} Y$. For any Banach spaces $X$ and $Y$, we have the identification:
$$
(X \hat{\otimes} Y)^{*}\cong B\left(X, Y^{*}\right)
$$
\begin{thm}[\cite{LZh1}, Theorem 2.8 and Remark 2.2]\label{thm1.1}
	\it Let $\{X_k\}$ be a sequence of normed speces. Then $\left(\sum\oplus X_k\right)_{\ell_\infty}$ has the BCP if and only if each $X_k$ has the BCP, and the BCP points of $\left(\sum\oplus X_k\right)_{\ell_\infty}$ can be chosen in $\left(\sum\oplus X_k\right)_{c_0}$.
\end{thm}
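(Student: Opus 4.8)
The plan is to prove both implications straight from the definition of the BCP, exploiting the $\ell_\infty$-geometry in an essential way; in particular, for the ``if'' direction the ball covering of $Z:=\left(\sum\oplus X_k\right)_{\ell_\infty}$ will be built from balls whose centers are supported on a single coordinate, which automatically places them in $\left(\sum\oplus X_k\right)_{c_0}$. For necessity, I would start from a ball covering $S_Z\subset\bigcup_n B(T_n,r_n)$ with $r_n<\|T_n\|$, fix $k$, and probe with the vectors $\tilde x$ having $x\in S_{X_k}$ in the $k$-th slot and $0$ elsewhere. Since $\|\tilde x\|=1$, $\tilde x$ lies in some $B(T_{n_0},r_{n_0})$; then $\|T_{n_0}^{(j)}\|\le r_{n_0}$ for every $j\ne k$, so $\|T_{n_0}\|=\sup_j\|T_{n_0}^{(j)}\|>r_{n_0}$ forces $\|T_{n_0}^{(k)}\|>r_{n_0}$, while $\|x-T_{n_0}^{(k)}\|\le r_{n_0}$. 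Hence $\{(T_n^{(k)},r_n):\|T_n^{(k)}\|>r_n\}$ is a countable ball covering of $S_{X_k}$ off the origin. (This uses the direct-sum structure and does not conflict with the failure of the BCP for general $1$-complemented subspaces.)

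For sufficiency, assume each $X_k$ has the BCP and fix $\lambda>1$. First I would take, for each $k$, a ball covering $S_{X_k}\subset\bigcup_m B(x^{(k)}_m,r^{(k)}_m)$ with $r^{(k)}_m<\|x^{(k)}_m\|$, and rescale the centers: Lemma \ref{lem1.1} gives $\|y-tx^{(k)}_m\|\le\|y-x^{(k)}_m\|+(t-1)\|x^{(k)}_m\|$ for $t\ge1$ and $y\in X_k$, so $B(x^{(k)}_m,r^{(k)}_m)\subset B\big(tx^{(k)}_m,\,r^{(k)}_m+(t-1)\|x^{(k)}_m\|\big)$ with new radius still strictly below $\|tx^{(k)}_m\|$; choosing $t=t^{(k)}_m$ large enough, I may assume $\|x^{(k)}_m\|>1$ for all $k,m$. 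The candidate BCP points are $\lambda\,\iota_k(x^{(k)}_m)$, where $\iota_k(x^{(k)}_m)\in Z$ carries $x^{(k)}_m$ in the $k$-th slot and $0$ elsewhere (so it lies in $\left(\sum\oplus X_k\right)_{c_0}$), with $k,m$ ranging over $\N$. Given $T\in S_Z$, I pick $k_0$ with $s:=\|T^{(k_0)}\|>\tfrac12$ (possible since $\sup_k\|T^{(k)}\|=1$), set $u:=T^{(k_0)}/s\in S_{X_{k_0}}$, and choose $m$ with $\|u-x^{(k_0)}_m\|\le r^{(k_0)}_m$. Writing $T^{(k_0)}=su$ and using the triangle inequality, the estimate reduces to
\[
\|T^{(k_0)}-\lambda x^{(k_0)}_m\|\le s\,r^{(k_0)}_m+(\lambda-s)\|x^{(k_0)}_m\|=\lambda\|x^{(k_0)}_m\|-s\big(\|x^{(k_0)}_m\|-r^{(k_0)}_m\big)<\lambda\|x^{(k_0)}_m\|,
\]
which, combined with $\sup_{j\ne k_0}\|T^{(j)}\|\le1<\lambda<\lambda\|x^{(k_0)}_m\|$, shows that the supremum norm of $T-\lambda\,\iota_{k_0}(x^{(k_0)}_m)$ is the larger of these two quantities and hence $T\in B^\circ\big(\lambda\,\iota_{k_0}(x^{(k_0)}_m),\,\lambda\|x^{(k_0)}_m\|\big)$, an open ball off the origin. (Shrinking each radius to $\max\big\{\lambda\|x^{(k_0)}_m\|-\tfrac12(\|x^{(k_0)}_m\|-r^{(k_0)}_m),\,1\big\}$ turns this into a covering by closed balls off the origin, and taking each $X_k=\R$ recovers the classical fact that $\ell_\infty$ has the (uniform) BCP.)

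The only non-routine point — and what I expect to be the main obstacle — is realizing that the centers of the covering of $S_Z$ must be supported on a single coordinate: one cannot simply paste together the coverings of the $X_k$ coordinatewise, because in the $\ell_\infty$-norm a prospective center must dominate \emph{every} coordinate of $T$ at once. Concentrating it on one coordinate works precisely because the remaining coordinates of a point of $S_Z$ are automatically at most $1$, and the Lemma \ref{lem1.1} rescaling lifts every $\|x^{(k)}_m\|$ above $1$ so that this ``tail'' never interferes; this single-coordinate choice is also exactly why the BCP points can be taken in the $c_0$-sum.
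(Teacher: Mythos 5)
Your proof is correct. Note that the paper does not prove this statement at all --- it is imported verbatim from Luo--Zheng (\cite{LZh1}, Theorem 2.8 and Remark 2.2) --- so there is no internal proof to compare against; your self-contained argument (probing with single-coordinate vectors for necessity, and using rescaled, singly-supported centers $\lambda\,\iota_k(x^{(k)}_m)$ for sufficiency, which also yields the $c_0$-sum location of the BCP points) is the standard and correct route, and it matches the style of the $\ell_\infty$-norm estimates used elsewhere in this paper (e.g.\ in Theorems \ref{thm1} and \ref{thm2}).
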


\begin{thm}
	Let $X$ be a Banach space.
	\begin{itemize}
	\item[(i)] $\mathcal B(X, \ell_\infty)$ has the BCP if and only if $X^*$ has the BCP.
	\item[(ii)] $\mathcal B(X, c_0)$ has the BCP if and only if $X^*$ has the BCP.
	\end{itemize}
\end{thm}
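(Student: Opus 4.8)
The plan is to realize both $\mathcal B(X,\ell_\infty)$ and $\mathcal B(X,c_0)$ inside a chain of isometric embeddings between the $c_0$-sum and the $\ell_\infty$-sum of countably many copies of $X^*$, and then to combine Theorem \ref{thm1.1}, Lemma \ref{lem3.1} and Theorem \ref{thm3.1}. First I would record the identifications: sending an operator $T$ to its sequence of rows $(e_n^*\circ T)_n\subset X^*$ gives an isometric isomorphism $\mathcal B(X,\ell_\infty)\cong\big(\sum\oplus X^*\big)_{\ell_\infty}$, since $\|T\|=\sup_n\|e_n^*\circ T\|$ and every bounded sequence in $X^*$ arises this way. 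Under the same correspondence, $T\in\mathcal B(X,c_0)$ exactly when $(e_n^*(Tx))_n\in c_0$ for every $x\in X$, i.e.\ when the row sequence is bounded and $w^*$-null; since a norm-null sequence in $X^*$ is in particular $w^*$-null, this yields the chain of isometric embeddings
\[\Big(\sum\oplus X^*\Big)_{c_0}\subset\mathcal B(X,c_0)\subset\mathcal B(X,\ell_\infty)\cong\Big(\sum\oplus X^*\Big)_{\ell_\infty}.\]

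For the direction ``$X^*$ has the BCP $\Rightarrow$ (i) and (ii)'', I would invoke Theorem \ref{thm1.1}: if $X^*$ has the BCP then $\big(\sum\oplus X^*\big)_{\ell_\infty}$ has the BCP—which already gives (i) through the isometric identification—and, crucially, its BCP points can be chosen in $\big(\sum\oplus X^*\big)_{c_0}$. Feeding this into Lemma \ref{lem3.1} for the triple $\big(\sum\oplus X^*\big)_{c_0}\subset\mathcal B(X,c_0)\subset\big(\sum\oplus X^*\big)_{\ell_\infty}$ then yields (ii). For the converse, if $\mathcal B(X,\ell_\infty)$ (resp.\ $\mathcal B(X,c_0)$) has the BCP, then Theorem \ref{thm3.1} applied with $Y=\ell_\infty$ (resp.\ $Y=c_0$) gives directly that $X^*$ has the BCP, completing both equivalences.

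The step I expect to be the crux is not any of the computations above but the use of the \emph{refined} form of Theorem \ref{thm1.1}: because the BCP is not inherited by subspaces in general, merely knowing that $\big(\sum\oplus X^*\big)_{\ell_\infty}$ has the BCP tells us nothing about the intermediate space $\mathcal B(X,c_0)$. What rescues the argument is precisely that the BCP points of the $\ell_\infty$-sum may be placed inside the $c_0$-sum, which sits inside $\mathcal B(X,c_0)$, so that Lemma \ref{lem3.1} applies. Everything else reduces to a routine verification of the isometric identifications and of the inclusions in the displayed chain.
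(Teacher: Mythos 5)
Your proposal is correct and follows essentially the same route as the paper: the identification $\mathcal B(X,\ell_\infty)\cong\big(\sum\oplus X^*\big)_{\ell_\infty}$, the chain $\big(\sum\oplus X^*\big)_{c_0}\subset\mathcal B(X,c_0)\subset\big(\sum\oplus X^*\big)_{\ell_\infty}$, Theorem \ref{thm1.1} with its refinement that the BCP points can be placed in the $c_0$-sum, Lemma \ref{lem3.1}, and Theorem \ref{thm3.1} for the converses. The only cosmetic difference is that you verify the $\ell_\infty$ identification directly via rows, whereas the paper routes it through $(X\hat\otimes_\pi\ell_1)^*$; you also correctly isolate the refined form of Theorem \ref{thm1.1} as the crux, which the paper uses implicitly.
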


\begin{proof}
	(i) Since
\[\mathcal B(X,\ell_\infty)\cong (X\hat{\otimes}_\pi \ell_1)^*\cong
(\ell_1(X))^*\cong\ell_\infty(X^*),\]
the statement follows immediately from Theorem \ref{thm1.1} and Theorem \ref{thm3.1}.	

	(ii) Since it is easy to prove that $\left(\sum\oplus X^*\right)_{c_0}\subset\mathcal B(X,c_0)\subset\left(\sum\oplus X^*\right)_{\ell_\infty}$, it follows from Theorem 1.1 and Lemma 3.1 that $\mathcal B(X,c_0)$ has the BCP.
\end{proof}

Since $\mathcal B(X,\ell_\infty)\cong (X\hat{\otimes}_\pi \ell_1)^*\cong B(\ell_1, X^*)$, we have the following corollaries.
\begin{cor}
Let $X$ be a normed space.
	Then $\mathcal B(\ell_1, X^*)$ has the BCP if and only if $X^*$ has the BCP.
\end{cor}

\begin{cor}
$\mathcal B(\ell_1)$ and $\mathcal B(c_0)$ both have the BCP.
\end{cor}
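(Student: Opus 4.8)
The plan is to derive both assertions directly from the structural results just established, exploiting the identity $\ell_1 = c_0^*$ so that $\mathcal B(\ell_1)$ and $\mathcal B(c_0)$ each fit one of the templates $\mathcal B(X,c_0)$, $\mathcal B(X,\ell_\infty)$, or $\mathcal B(\ell_1,X^*)$ for an appropriate $X$, together with the trivial observation that $\ell_1$, being separable, has the BCP.

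First I would treat $\mathcal B(c_0)$. Writing $\mathcal B(c_0)=\mathcal B(c_0,c_0)$ and applying part (ii) of the preceding theorem with $X=c_0$, we get that $\mathcal B(c_0,c_0)$ has the BCP if and only if $(c_0)^*=\ell_1$ has the BCP; since $\ell_1$ is separable this holds, so $\mathcal B(c_0)$ has the BCP. Next I would treat $\mathcal B(\ell_1)$. Using the identification $\mathcal B(X,\ell_\infty)\cong\mathcal B(\ell_1,X^*)$ recorded above with $X=c_0$, one has $\mathcal B(\ell_1)=\mathcal B(\ell_1,\ell_1)\cong\mathcal B(c_0,\ell_\infty)$, and by part (i) of the preceding theorem the latter has the BCP if and only if $(c_0)^*=\ell_1$ has the BCP, which again holds by separability. (Alternatively one may invoke the Corollary on $\mathcal B(\ell_1,X^*)$ directly with $X=c_0$.) Hence $\mathcal B(\ell_1)$ has the BCP.

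As for the main obstacle: there is essentially none at this stage — both statements are immediate corollaries of the already-proved stability results. The only point requiring a little care is bookkeeping: one must identify which of the two coordinates of $\mathcal B(\cdot,\cdot)$ the space $\ell_1$ occupies and apply the duality identification $(X\hat\otimes_\pi\ell_1)^*\cong\mathcal B(\ell_1,X^*)$ in the correct direction, so that the hypothesis of the relevant theorem (BCP of the dual $X^*$) reduces to the BCP of the separable space $\ell_1$.
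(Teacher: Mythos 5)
Your proposal is correct and follows essentially the same route the paper intends: the corollary is an immediate consequence of part (ii) of the preceding theorem applied with $X=c_0$ (giving $\mathcal B(c_0)$), and of the isometric identification $\mathcal B(\ell_1,X^*)\cong\mathcal B(X,\ell_\infty)$ with $X=c_0$ (giving $\mathcal B(\ell_1)$), in each case reducing to the BCP of the separable space $\ell_1$. The only point worth noting explicitly is that while the BCP is not an isomorphic invariant, it is preserved by the \emph{isometric} isomorphisms used here, so the reduction is legitimate.
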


\section*{Acknowledgments} The authors would like to express appreciation to Longyun Ding and Chi-Keung Ng for helpful discussions.

\end{document}